\newtheorem{theorem}{Theorem}[section]
\newtheorem{lemma}[theorem]{Lemma}
\newtheorem{proposition}[theorem]{Proposition}
\theoremstyle{definition}
\newtheorem{definition}[theorem]{Definition}
\theoremstyle{remark}
\newtheorem{remark}[theorem]{Remark}
\numberwithin{equation}{section}
\renewcommand{\leq}{\leqslant}
\renewcommand{\setminus}{\smallsetminus}
\title[ 2-local $ (\upvarphi, \uppsi) $-Derivations on Finite von Neumann Algebras]
 { 2-local $ (\upvarphi, \uppsi) $-Derivations on Finite  von Neumann Algebras}
\author[M. H. Fard]{Meysam Habibzadeh Fard }
\address{%
Department of Mathematics, \\
University of Guilan, \\
 Guilan, \\
  Iran }
\email{mhabibzadeh@phd.guilan.ac.ir,  meysam.habibzadeh@icloud.com}
\keywords{2-local $ (\upvarphi, \uppsi) $-derivations; finite von Neumann algebras}
\date{\today}
\begin{document}

{\begin{flushleft}\baselineskip9pt\scriptsize
MANUSCRIPT
\end{flushleft}}
\vspace{18mm} \setcounter{page}{1} \thispagestyle{empty}

\begin{abstract}
In this paper,  I introduce the concept of $ (\upvarphi, \uppsi) $-finite von Neumann algebras and I show that if $ \mathscr{M} $ is a finite and $ (\upvarphi, \uppsi) $-finite von Neumann algebra togather with condition $ \{ \big( \Delta(u+v)-\Delta(u)-\Delta(v)\big)^{*}\} \subseteq \uppsi(\mathscr{M}) $, then  each (approximately) 2-local $ (\upvarphi, \uppsi) $-derivation $ \delta $  on  $ \mathscr{M} $,   is a $ (\upvarphi, \uppsi) $-derivation.
\end{abstract}

\maketitle


\section{Introduction}     

Let $ \mathscr{A} $ be a commutative Banach algebra and $ \Phi_{\mathscr{A}} $ its spectrum. Then
for each $ \upvarphi, \uppsi \in \Phi_{\mathscr{A}}\cup \{0\} $, $ \mathbb{K} $ is an $ \mathscr{A} $-bimodule with the following actions
$$
a.z := \upvarphi(a)z, \hspace{.5cm} z.a := \uppsi(a)z, \hspace{.5cm}(a \in \mathscr{A}, z \in \mathbb{K})
$$
this module is denoted by $ \mathbb{K}_{\upvarphi, \uppsi} $ and we write $ \mathbb{K}_{\upvarphi} $ for $ \mathbb{K}_{\upvarphi, \upvarphi} $. Each one-dimentional
$ \mathscr{A} $-bimodule has the form $ \mathbb{K}_{\upvarphi, \uppsi} $ for some $ \upvarphi, \uppsi \in \Phi_{\mathscr{A}}\cup \{0\} $.
Let $ \mathscr{A} $ be an algebra, and let $ E $ he an $ \mathscr{A} $-bimodule. Then $ E $ is symmetric (or
commutative) if
$$
a . x = x . a, \hspace{.5cm}(a \in \mathscr{A}, x \in E) .
$$
For example, the $ \mathscr{A} $-himodule $ \mathbb{K}_{\upvarphi, \uppsi} $ is symmetric if and only if $ \upvarphi= \uppsi $.

A linear functional $ d $ on $ \mathscr{A} $  is a point derivation at $ \upvarphi $ if
$$
d(ab) = \upvarphi(a)d(b) + \upvarphi(b)d(a), \hspace{.5cm}(a, b \in \mathscr{A}).
$$
The definition of a $ (\upvarphi, \uppsi) $-derivation is derived from the definition of a point derivation (see \cite{Dales},  Proposition 1.8.10).

It is known that each derivation on a commutative and semi-simple Banach algebras  (commutative $ C^{*} $-algebras) is zero (\cite{Sakai2},  Corollary 2.2.3 and Corollary 2.2.8), but we can study  $ (\upvarphi, \uppsi) $-derivations on such algebras.

In Section 2, Theorem \ref{1}, is an extention of (\cite{Dales}, Theorem 2.8.63). Indeed, we show that if $ \mathscr{A} $ is a weakly amenable commutative Banach algebras, $ \upvarphi \in Hom(\mathscr{A}) $ is onto and $ X $ is a Banach  $ \mathscr{A} $-bimodule with the property that "$ a.x\neq 0 \Leftrightarrow x.a\neq 0 $"~$(a\in \mathscr{A}, x\in X) $, then each $ \upvarphi $-derivation from $ \mathscr{A} $ to $ X $ is zero.
 In  Theorem \ref{2}, we prove that each $ C^{*} $-algebra which has a separating family of normal tracial states is commutative.

In Section 3,  we  survey about Johnson's theorem for Jordan $ (\upvarphi, \uppsi) $-derivations on abelian $ C^{*} $-algebras.  Indeed, we show that   every bounded Jordan $ (\upvarphi, \uppsi) $-derivation  from a commutative $ C^{*} $-algebra $ \mathscr{A} $  into a Banach $ \mathscr{B}-\mathscr{C} $-bimodule $ X $ is a $ (\upvarphi, \uppsi) $-derivation. 

In Section 4,  Theorem \ref{44}, we prove that each $ (\upvarphi, \uppsi) $-derivation on finite von Neumann algebra $ \mathscr{M} $, is $ (\upvarphi, \uppsi) $-inner, this is an extention of (\cite{Sakai}, Theorem 4.1.6).  We introduce the concept of $ (\upvarphi, \uppsi) $-finite von Neumann algebras and  we use  Theorem \ref{3}, to prove that  every (approximately) 2-local $ (\upvarphi, \uppsi) $-derivation  on  finite and $ (\upvarphi, \uppsi) $-finite von Neumann algebras with condition $ \{ \big( \Delta(u+v)-\Delta(u)-\Delta(v)\big)^{*} \} \subseteq \uppsi(\mathscr{M}) $,  is a $ (\upvarphi, \uppsi) $-derivation.


\section{$ (\upvarphi, \uppsi) $-derivations}

Suppose that $ \mathscr{A}, \mathscr{B}, \mathscr{C} $ are Banach algebras and let
$ \upvarphi \in Hom( \mathscr{A}, \mathscr{B}) $,
 $ \uppsi \in Hom( \mathscr{A}, \mathscr{C}) $
 \footnote{
 If $ \mathscr{A}, \mathscr{B} $ are Banach algebras and $ \upvarphi \in Hom( \mathscr{A}, \mathscr{B}) $, then
  $ \mathscr{B} $
  is an $ \mathscr{A} $-bimodule with actions
\begin{equation*}
 a~\smallblacktriangleright ~b:= \upvarphi(a)b, \hspace{.25cm}
 b~ \smallblacktriangleleft~a:= b\upvarphi(a), \hspace{.5cm}(a\in \mathscr{A}, b\in \mathscr{B})
\end{equation*}
 }
 and let $ E $ be a Banach $ \mathscr{B}-\mathscr{C} $-module
 \footnote{If $ \mathscr{B} $, $ \mathscr{C} $ are Banach algebras, then $ E $ is a Banach $ \mathscr{B}-\mathscr{C} $-module, if it is a left Banach $ \mathscr{B} $-module and right Banach $ \mathscr{C} $-module. Clearly it   satisfy the following condition
 $$
 \|b~ \cdot~x~ \cdot~c\|_{E} \leq \|b\|_{\mathscr{B}}\|x\|_{E}\|c\|_{\mathscr{C}}
 $$.}.

\begin{definition}
 A linear operator $ \delta :  \mathscr{A}\longrightarrow E $ is a  $ (\upvarphi, \uppsi) $-derivation if it satisfies
$ \delta(ab)= \delta(a)\cdot \uppsi(b) + \upvarphi(a)\cdot \delta(b) $, ($ a,b \in \mathscr{A} $). A $ (\upvarphi, \uppsi) $-derivation $ \delta $ is called $ (\upvarphi, \uppsi) $-inner derivation if there exists $ x \in E $ such that $ \delta(a) = x\cdot \uppsi(a) - \upvarphi(a)\cdot x $
  ($ a \in \mathscr{A} $).
\end{definition}

Let $ \mathcal{Z}_{\upvarphi, \uppsi}^{1}(\mathscr{A}, E) $, denote the set of all continuous $ (\upvarphi, \uppsi) $-derivations from $ \mathscr{A} $ to $ E $ and let $ \mathcal{N}_{\upvarphi, \uppsi}(\mathscr{A}, E) $, denote the set of all inner $ (\upvarphi, \uppsi) $-derivations from $ \mathscr{A} $ to $ E $.
The first cohomology group $ H_{\upvarphi, \uppsi}^{1}(\mathscr{A}, E) $ is defined to be the quotient space
 $ \mathcal{Z}_{\upvarphi, \uppsi}^{1}(\mathscr{A}, E) / \mathcal{N}_{\upvarphi, \uppsi}^{1}(\mathscr{A}, E) $.
 In the case that $ \upvarphi= \uppsi $, we use the notations  $ \mathcal{Z}_{\upvarphi}^{1}(\mathscr{A}, E) $, $ \mathcal{N}_{\upvarphi}(\mathscr{A}, E) $ and $ H_{\upvarphi}^{1}(\mathscr{A}, E) $.


\begin{theorem} \label{1}
Let $ \mathscr{A}, \mathscr{B} $ be Banach algebras, $ \upvarphi \in \mathrm{Hom}(\mathscr{B}, \mathscr{A}) $,
 $ \mathscr{C} = \varphi(\mathscr{B}) $ is closed and
$ \mathscr{H}^{1}_{\upvarphi}(\mathscr{B}, \mathscr{C}^{*}) = \{ 0 \} $. Then
\begin{itemize}
\item[(i)] $ \varphi(\mathscr{B}) = \overline{\varphi(\mathscr{B}^{2})} $;
\item[(ii)] If $ \mathscr{A} $ is commutative and $ \upvarphi(\mathscr{B}) = \mathscr{A} $, then
$ \mathcal{Z}^{1}(\mathscr{A}, E) = \{ 0\} $ for each Banach $ \mathscr{A} $-bimodule $ E $ with following condition
 \begin{equation}\label{12}
a.x \neq 0 \hspace{.15cm} \Longleftrightarrow \hspace{.15cm} x.a \neq 0, \hspace{.5cm} (a \in \mathscr{A}, x \in E).
\end{equation}
\end{itemize} 
\end{theorem}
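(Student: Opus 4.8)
The plan is to imitate the proof of (\cite{Dales}, Theorem~2.8.63), carrying each step through the homomorphism $\upvarphi$. Put $\mathscr{C}=\upvarphi(\mathscr{B})$ and view $\upvarphi$ as a continuous homomorphism of $\mathscr{B}$ onto $\mathscr{C}$; since $\upvarphi$ is multiplicative and surjective onto $\mathscr{C}$ we have $\upvarphi(\mathscr{B}^{2})=\mathscr{C}^{2}$, the linear span of products in $\mathscr{C}$, so that (i) asserts precisely that $\mathscr{C}^{2}$ is dense in $\mathscr{C}$, while the standing hypothesis says that every continuous $\upvarphi$-derivation from $\mathscr{B}$ into the dual Banach $\mathscr{C}$-bimodule $\mathscr{C}^{*}$ is $\upvarphi$-inner.

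For (i) I would argue by contradiction. Suppose $\overline{\mathscr{C}^{2}}\neq\mathscr{C}$ and choose, by Hahn--Banach, a nonzero $\lambda\in\mathscr{C}^{*}$ with $\lambda(cc')=0$ for all $c,c'\in\mathscr{C}$. Define $\delta\colon\mathscr{B}\to\mathscr{C}^{*}$ by $\delta(b):=\lambda(\upvarphi(b))\,\lambda$. Since $\lambda$ annihilates $\mathscr{C}^{2}$, one sees at once that $\upvarphi(b)\cdot\lambda=\lambda\cdot\upvarphi(b)=0$ for every $b$ and that $\lambda(\upvarphi(b)\upvarphi(b'))=0$; hence $\delta$ is a continuous $\upvarphi$-derivation, and it is nonzero because $\lambda\neq0$ and $\upvarphi(\mathscr{B})=\mathscr{C}$. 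By hypothesis $\delta(b)=x\cdot\upvarphi(b)-\upvarphi(b)\cdot x$ for some $x\in\mathscr{C}^{*}$; evaluating at $c\in\mathscr{C}$ and letting $\upvarphi(b)$ range over all $c'\in\mathscr{C}$ gives $\lambda(c')\lambda(c)=x(c'c)-x(cc')$ for all $c,c'\in\mathscr{C}$. The left-hand side is symmetric and the right-hand side antisymmetric in $(c,c')$, so both vanish; taking $c=c'$ with $\lambda(c)\neq0$ forces $\lambda(c)^{2}=0$, a contradiction. Hence $\mathscr{C}=\overline{\mathscr{C}^{2}}$.

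For (ii), now $\mathscr{C}=\mathscr{A}$. Because $\upvarphi$ is onto $\mathscr{A}$, composing any continuous derivation $\mathscr{A}\to\mathscr{A}^{*}$ with $\upvarphi$ gives a continuous $\upvarphi$-derivation $\mathscr{B}\to\mathscr{A}^{*}$, which by hypothesis is $\upvarphi$-inner; as $\mathscr{A}$ is commutative, $\mathscr{A}^{*}$ is symmetric, so such inner derivations vanish, and surjectivity of $\upvarphi$ then shows that $\mathscr{A}$ is weakly amenable, whence $\overline{\mathscr{A}^{2}}=\mathscr{A}$ by (i). Let $\delta\in\mathcal{Z}^{1}(\mathscr{A},E)$ with $E$ satisfying \eqref{12}. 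Condition \eqref{12} means that for each $a$ the operators $x\mapsto a\cdot x$ and $x\mapsto x\cdot a$ have the same kernel, so $E_{0}:=\{x\in E:\mathscr{A}\cdot x=0\}=\{x\in E:x\cdot\mathscr{A}=0\}$ is a closed sub-bimodule with trivial actions; if $\delta(\mathscr{A})\subseteq E_{0}$ then $\delta$ vanishes on $\mathscr{A}^{2}$, hence on $\overline{\mathscr{A}^{2}}=\mathscr{A}$, so $\delta=0$. Thus the theorem reduces to proving $\mathscr{A}\cdot\delta(\mathscr{A})=0$.

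The natural route to $\mathscr{A}\cdot\delta(\mathscr{A})=0$ is: first, with $\mathscr{S}:=\overline{\operatorname{span}}\{a\cdot x-x\cdot a:a\in\mathscr{A},\,x\in E\}$, check (using commutativity of $\mathscr{A}$) that $\mathscr{S}$ is a closed sub-bimodule and $E/\mathscr{S}$ is symmetric, so the induced derivation into $E/\mathscr{S}$ is zero — by weak amenability of $\mathscr{A}$ together with $\overline{\mathscr{A}^{2}}=\mathscr{A}$, i.e. (\cite{Dales}, Theorem~2.8.63) — giving $\delta(\mathscr{A})\subseteq\mathscr{S}$; then, upgrade this to $\delta(\mathscr{A})\subseteq E_{0}$ using \eqref{12}. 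Equivalently, for $\Lambda\in E^{*}$ put $\langle\Delta_{\Lambda}(a),b\rangle:=\Lambda(b\cdot\delta(a))$; a direct computation shows that $\Delta_{\Lambda}(ac)-a\cdot\Delta_{\Lambda}(c)-\Delta_{\Lambda}(a)\cdot c$ is the functional $b\mapsto\Lambda\bigl((b\cdot\delta(a))\cdot c-c\cdot(b\cdot\delta(a))\bigr)$, so $\Delta_{\Lambda}$ is a derivation — and then $\Delta_{\Lambda}=0$ by weak amenability, hence $\mathscr{A}\cdot\delta(\mathscr{A})=0$ — exactly when every element of $\mathscr{A}\cdot\delta(\mathscr{A})$ commutes with $\mathscr{A}$ in $E$. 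This commutation is the main obstacle I expect: it is automatic for symmetric $E$ (the classical situation), and the entire content of replacing ``symmetric'' by \eqref{12} is to recover it in general, presumably by combining $\delta(\mathscr{A})\subseteq\mathscr{S}$ with the equality of kernels supplied by \eqref{12}. Once that compatibility is in hand, the remaining steps — weak amenability of $\mathscr{A}$ and the surjectivity/$\upvarphi$-inner bookkeeping — are routine.
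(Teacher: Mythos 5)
Your part (i) is correct and is essentially the paper's own argument: the same rank--one map $b\mapsto\lambda(\upvarphi(b))\lambda$ built from a Hahn--Banach functional annihilating $\overline{\upvarphi(\mathscr{B}^{2})}$, shown to be a continuous $\upvarphi$-derivation into $\mathscr{C}^{*}$ that cannot be $\upvarphi$-inner. Your preliminary steps for (ii) are also sound: composing a derivation $\mathscr{A}\to\mathscr{A}^{*}$ with $\upvarphi$, using that $\mathscr{A}^{*}$ is symmetric when $\mathscr{A}$ is commutative, and invoking surjectivity does give weak amenability of $\mathscr{A}$ and $\overline{\mathscr{A}^{2}}=\mathscr{A}$, and the theorem does reduce to showing $\mathscr{A}\cdot\delta(\mathscr{A})=0$.

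The problem is that the decisive step is exactly the one you defer with ``presumably'', and it is a genuine gap, not routine bookkeeping. In your dual formulation, $\Delta_{\Lambda}$ is only known to be a derivation when $\Lambda$ annihilates the commutator space $\mathscr{S}$; for such $\Lambda$, weak amenability gives $\Lambda(b\cdot\delta(a))=0$, and Hahn--Banach then yields only $\mathscr{A}\cdot\delta(\mathscr{A})\subseteq\mathscr{S}$ --- which is no more than the quotient argument already gave --- rather than $\mathscr{A}\cdot\delta(\mathscr{A})=0$. Condition \eqref{12} is a statement about individual pairs $(a,x)$ (equality of kernels of the two module actions) and supplies no visible control over membership in the closed linear span $\mathscr{S}$, so no mechanism for the ``upgrade'' you hope for is actually on the table. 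For comparison, the paper's proof of (ii) is the same construction as yours: it picks $\lambda\in E^{*}$ with $\lambda|_{N}\equiv 0$, $N=\mathscr{S}$, defines $R_{\lambda}$ by $\langle a,R_{\lambda}(x)\rangle=\langle a.x,\lambda\rangle$ (your $\Delta_{\lambda}$, precomposed with $\upvarphi$), and uses $\mathscr{H}^{1}_{\upvarphi}(\mathscr{B},\mathscr{A}^{*})=\{0\}$ to force $\lambda(\upvarphi(b_{0}).D'(b_{0}))=0$; it crosses your obstacle simply by asserting, without justification, that $\upvarphi(b_{0}).D'(b_{0})\notin N$, so that $\lambda$ can also be normalized with $\lambda(\upvarphi(b_{0}).D'(b_{0}))=1$. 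So you have located the crux correctly, but your proposal does not supply it: to complete (ii) you must actually prove from \eqref{12} that $a_{0}.\delta(a_{0})\notin\mathscr{S}$ whenever $\delta(a_{0}^{2})\neq 0$ (equivalently, that $\mathscr{A}\cdot\delta(\mathscr{A})=0$), and neither your sketch nor the paper's printed argument contains a proof of that implication.
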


 \begin{proof} (i) Asuume that $ \upvarphi(\mathscr{B}) \setminus \overline{\upvarphi(\mathscr{B}^{2})} \neq \emptyset $. Take $ \upvarphi(b_{0}) \in \upvarphi(\mathscr{B}) \setminus \overline{\upvarphi(\mathscr{B}^{2})} $, choose $ \lambda_{0} \in \mathscr{C}^{*} $ with  $ \lambda_{0}|_{\overline{\varphi(\mathscr{B}^{2})}} \equiv 0 $
 and $ \langle \upvarphi(b_{0}), \lambda_{0} \rangle = 1 $. Define
 \begin{align*}
 D & = \lambda_{0} \otimes \lambda_{0}: \mathscr{B} \hookrightarrow \mathscr{C}^{*} \\
     & b \longmapsto D(b) := \langle \upvarphi(b_{0}), \lambda_{0} \rangle \lambda_{0}
 \end{align*}
 Certainly $ D $ is a continuous linear map and since $ \lambda_{0}|_{\overline{\varphi(\mathscr{B}^{2})}} \equiv 0 $, we have
 $ D(b_{1}b_{2}) = \langle \upvarphi(b_{1}b_{2}), \lambda_{0} \rangle \lambda_{0} = 0 $ ( $ b_{1}, b_{2} \in \mathscr{B} $). So
 \begin{align*}
 \big\langle \upvarphi(b), \upvarphi(b_{1})D(b_{2}) &+D(b_{1})\upvarphi(b_{2}) \big\rangle = \big\langle \upvarphi(b), \upvarphi(b_{1})D(b_{2}) \big\rangle  + \big\langle \upvarphi(b),  D(b_{1})\upvarphi(b_{2}) \big\rangle \\
 & = \big\langle \upvarphi(b)\upvarphi(b_{1}), D(b_{2}) \big\rangle  + \big\langle \upvarphi(b_{2})\upvarphi(b),  D(b_{1}) \big\rangle \\
   & = \big\langle \upvarphi(bb_{1}), \big\langle \upvarphi(b_{2}), \lambda_{0} \big\rangle \lambda_{0} \big\rangle  + \big\langle \upvarphi(b_{2}b),  \big\langle \upvarphi(b_{1}), \lambda_{0} \big\rangle \lambda_{0} \big\rangle \\
    & = \big\langle \upvarphi(bb_{1}), \lambda_{0} \big\rangle \big\langle \upvarphi(b_{2}), \lambda_{0} \big\rangle  + \big\langle \upvarphi(b_{2}b), \lambda_{0} \big\rangle \big\langle \upvarphi(b_{1}), \lambda_{0} \big\rangle = 0.
 \end{align*}
Therefore $ D(b_{1}b_{2}) = \upvarphi(b_{1})D(b_{2}) + D(b_{1})\upvarphi(b_{2}) $ and so $ D \in \mathcal{Z}^{1}_{\upvarphi}(\mathscr{B}, \mathscr{C}^{*})  $.  Now we have
\begin{align*}
   \langle \upvarphi(b_{0}), D(b_{0}) \big\rangle  &= \big\langle \upvarphi(b_{0}), \big\langle \upvarphi(b_{0}), \lambda_{0} \big\rangle \lambda_{0} \big\rangle    \\
         &  = \big\langle \upvarphi(b_{0}), \lambda_{0} \big\rangle  \big\langle \upvarphi(b_{0}), \lambda_{0} \big\rangle = 1
\end{align*}
But for $ \delta_{\lambda}(b) = [\lambda, \upvarphi(b_{0})] $, ($ \lambda \in \mathscr{C}^{*} $) we have
\begin{align*}
 \langle \upvarphi(b_{0}), \delta_{\lambda}(b_{0}) \big\rangle  &= \big\langle \upvarphi(b_{0}), \lambda\upvarphi(b_{0}) - \upvarphi(b_{0})\lambda \big\rangle \\
   & = \big\langle \upvarphi(b_{0}), \lambda\upvarphi(b_{0}) \big\rangle - \big\langle \upvarphi(b_{0}), \upvarphi(b_{0})\lambda \big\rangle \\
   & = \big\langle \upvarphi(b_{0}^{2}), \lambda \big\rangle - \big\langle \upvarphi(b_{0}^{2}), \lambda \big\rangle = 0, \hspace{.5cm} (\lambda \in \mathscr{C}^{*})
\end{align*}
Therefore $ D $ is not $ \upvarphi $-inner derivation, but  $ \mathscr{H}^{1}_{\upvarphi}(\mathscr{B}, \mathscr{C}^{*}) = \{0\} $.

(ii)  Assume that $ D\in \mathcal{Z}^{1}(\mathscr{A}, E) $ with $ D \neq 0 $, the partition (i) and equality $ \upvarphi(\mathscr{B}) = \mathscr{A} $, implies that $ \overline{\upvarphi(\mathscr{B}^{2})} = \upvarphi(\mathscr{B}) $
 and $ D' = D\circ \upvarphi \in \mathcal{Z}_{\upvarphi}^{1}(\mathscr{B}, E) $. So there is a $ b_{0} \in \mathscr{B} $ such that
 $$
 D'(b_{0}^{2}) = D\circ \upvarphi(b_{0}^{2}) \neq 0.
 $$
 Hence $ \upvarphi(b_{0}).D'(b_{0}) + D'(b_{0}).\upvarphi(b_{0}) \neq 0 $
 It follows by  condition (\ref{12}) that $ \upvarphi(b_{0}).D'(b_{0}) \neq 0 $. Put $ N = \overline{L.S\{a.x - x.a | a\in \mathscr{A}, x\in E \}^{\|\cdot\|}} $. Since $ N $ is a closed linear subspace of $ E $ and $ \upvarphi(b_{0}).D'(b_{0}) \notin N $
there is a $ \lambda \in E^{*} $ such that $ \lambda|_{N}\equiv 0 $ and $ \lambda(\upvarphi(b_{0}).D'(b_{0})) \neq 0 $. It follows by ([1], Proposition 2.6.6) that there is a $ R_{\lambda} \in B_{\mathscr{A}}(E, \mathscr{A}^{*}) $ such that
$ \big\langle a, R_{\lambda}(x)\big\rangle = \big\langle a.x, \lambda\big\rangle $. Clearly we have
$ R_{\lambda}(x).c = R_{\lambda}(x.c) = R_{\lambda}(c.x) = c.R_{\lambda}(x) $, ($ c \in \mathscr{A}, x \in E $). Indeed for each $ a, c \in \mathscr{A} $ and $ x\in E $ we have
\begin{align*}
\big\langle a, R_{\lambda}(x.c)\big\rangle &= \big\langle a.(x.c), \lambda \big\rangle = \big\langle (x.c).a, \lambda\big\rangle = \big\langle x.(ca), \lambda\big\rangle \\
   & = \big\langle x.(ac), \lambda\big\rangle  \big\langle (ac).x, \lambda\big\rangle = \big\langle a.(c.x), \lambda\big\rangle = \big\langle a, R_{\lambda}(c.x)\big\rangle
\end{align*}
On the other hands
\begin{align*}
 \big\langle a, R_{\lambda}(c.x)\big\rangle &= \big\langle (ac).x, \lambda\big\rangle \\
   &= \big\langle ac, R_{\lambda}(x)\big\rangle = \big\langle a, c.R_{\lambda}(x)\big\rangle
\end{align*}
And
\begin{align*}
 \big\langle a, R_{\lambda}(x.c)\big\rangle &= \big\langle x.(ca), \lambda\big\rangle = \big\langle (ca).x, \lambda\big\rangle  \\
                   &= \big\langle ca, R_{\lambda}(x)\big\rangle = \big\langle a, R_{\lambda}(x).c\big\rangle
\end{align*}
Therefore $ R_{\lambda}\circ D' \in \mathcal{Z}_{\upvarphi}^{1}(\mathscr{B}, \mathscr{A}^{*}) $. It follows by assumption
$ \mathscr{H}^{1}_{\upvarphi}(\mathscr{B}, \mathscr{A}^{*}) = \{ 0 \} $ that there is a $ \mu \in \mathscr{A}^{*} $ such that $ R_{\lambda}\circ D'(b) = [\mu, \upvarphi(b)] $ (i.e. $ R_{\lambda}\circ D' $ is a $ \upvarphi $-inner derivation) and we have
\begin{align*}
 1 = \big\langle \upvarphi(b_{0}).D'(b_{0}), \lambda \big\rangle &= \big\langle \upvarphi(b_{0}), R_{\lambda}\circ D'(b_{0})\big\rangle \\
   &= \big\langle \upvarphi(b_{0}), \mu.\upvarphi(b_{0}) - \upvarphi(b_{0}).\mu \big\rangle \\
   &= \big\langle \upvarphi(b_{0}), \mu.\upvarphi(b_{0}) \big\rangle  - \big\langle \upvarphi(b_{0}), \upvarphi(b_{0}).\mu \big\rangle \\
 &= \big\langle \upvarphi(b_{0}^{2}), \mu \big\rangle  - \big\langle \upvarphi(b_{0}^{2}), \mu \big\rangle = 0.
\end{align*}
Which is impossible.
 \end{proof}


\begin{remark}
Let in condition $ (ii) $ of Theorem \ref{2}, 
$ \mathscr{A} $ be a subalgebra of $ \mathscr{B} $ and we have $ \varphi^{2} = \varphi $. then by changing condition 
 $ \mathscr{H}^{1}_{\varphi}(\mathscr{B}, \mathscr{A}^{*}) = \{ 0 \} $
with 
 $ \mathscr{H}^{1}_{\varphi}(\mathscr{A}, \mathscr{A}^{*}) = \{ 0 \} $,
the proof is still valid.
\end{remark}


\begin{remark} 
Let $ \mathscr{B} $ be a unital $ C^{*} $-algebra, 
  $ a\in \mathscr{B}_{sa} $ and let 
 $ \mathscr{A} = \mathfrak{A}^{*}(a, 1_{\mathscr{B}}) $ 
 be the $ C^{*} $-algebra generated by 
  $ \{ a, 1_{\mathscr{B}} \} $ and let  $ E $
be a Banach $ \mathscr{B} $-bimodule. 
If 
  $ \mathcal{R}:\mathscr{B}\rightarrow \mathscr{A} $
 be the restriction mapp
  \footnote{the Restriction mapp from $ \mathscr{B} $ onto $ \mathscr{A} $. i.e.
  $ \mathcal{R}^{2} = \mathcal{R}, ~~ \mathcal{R}(a) = a, ~(a\in \mathcal{A}) $.
  }
 from $ \mathscr{B} $ to $ \mathscr{A} $, then 
  $ E $ is a Banach $ \mathscr{A} $-bimodule with the following actions 
  $$
  a.x = \mathcal{R}(a).x, ~~~~ x.a = x.\mathcal{R}(a)~~~~~(a\in \mathscr{A}, x\in E)
  $$
  amd for each 
  $ \delta \in \mathcal{Z}^{1}(\mathscr{B}, E) $, 
  we have
 $$
 \delta|_{\mathscr{A}} = \delta\circ \mathcal{R} \in \mathcal{Z}_{\mathcal{R}}^{1}(\mathscr{B}, E) ~~\Rightarrow~~ \delta\circ \mathcal{R} \in \mathcal{Z}_{\mathcal{R}}^{1}(\mathscr{A}, E) = \mathcal{Z}^{1}(\mathscr{A}, E).
 $$
Now since every  $ C^{*} $-algebra is weakly amenable 
 \footnote{Each $ C^{*} $-algebra $ \mathscr{A} $ is weakly amenable. i.e.
 $ \mathscr{H}^{1}(\mathscr{A}, \mathscr{A}^{*}) = \{ 0 \} $.
 }
 we have 
 $$
 \mathscr{H}^{1}_{\mathcal{R}}(\mathscr{A}, \mathscr{A}^{*}) = \mathscr{H}^{1}(\mathscr{A}, \mathscr{A}^{*}) = \{ 0 \}.
 $$
 On the other hand for any 
 $ b\in \mathscr{B} $, 
 $$
 \sigma(ab)\cup \{0\} = \sigma(ba)\cup \{0\}
 $$
 so 
 $ (ab \neq 0 \Leftrightarrow ba \neq 0) $.
 If $ \delta:\mathscr{B}\rightarrow \mathscr{B} $ be a derivation, then it follows by \ref{1}
that $ \delta\circ \mathcal{R} = 0 $, 
 therefore  $ \delta \equiv 0 $. 
So there is no every where defined nonzero derivation on $ C^{*} $-algebras. 
\end{remark}


\begin{theorem} \label{2}
Let $ \mathscr{M} $ be a Banach algebra which has a separating family of normal tracial states and let $ \mathscr{A} $ be a weakly amenable Banach subalgebra of  $ \mathscr{M} $ and let  $ \mathscr{M}^{2} = \mathscr{M} $. Then each derivation $ \delta $ from $ \mathscr{A} $ to $ \mathscr{M} $ is zero.
\end{theorem}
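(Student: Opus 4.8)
The plan is to transport $\delta$ to a derivation that takes values in a \emph{dual} module, where the weak amenability of $\mathscr{A}$ can be brought to bear, and then to recover $\delta\equiv 0$ from the separating family of normal traces. We take $\delta$ to be continuous throughout (this is automatic when $\mathscr{A}$ is, e.g., semisimple, which is the setting of interest).

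Fix a normal tracial state $\tau$ from the separating family and define $D_{\tau}\colon\mathscr{A}\to\mathscr{A}^{*}$ by $\langle c,D_{\tau}(a)\rangle:=\tau\bigl(\delta(a)\,c\bigr)$ for $a,c\in\mathscr{A}$. Then $D_{\tau}$ is linear and $\|D_{\tau}(a)\|\le\|\delta\|\,\|a\|$, so it is bounded. Its decisive property is that it is a derivation for the usual $\mathscr{A}$-bimodule structure on $\mathscr{A}^{*}$, namely $\langle c,a\cdot f\rangle=\langle ca,f\rangle$ and $\langle c,f\cdot b\rangle=\langle bc,f\rangle$: using $\delta(ab)=\delta(a)b+a\,\delta(b)$ together with the trace identity $\tau(xy)=\tau(yx)$,
\begin{align*}
\langle c,D_{\tau}(ab)\rangle
 &=\tau\bigl(\delta(a)bc\bigr)+\tau\bigl(a\,\delta(b)c\bigr)
  =\langle bc,D_{\tau}(a)\rangle+\tau\bigl(\delta(b)\,ca\bigr)\\
 &=\langle c,D_{\tau}(a)\cdot b\rangle+\langle ca,D_{\tau}(b)\rangle
  =\bigl\langle c,\;D_{\tau}(a)\cdot b+a\cdot D_{\tau}(b)\bigr\rangle ,
\end{align*}
so $D_{\tau}\in\mathcal{Z}^{1}(\mathscr{A},\mathscr{A}^{*})$. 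Since $\mathscr{A}$ is weakly amenable, $\mathscr{H}^{1}(\mathscr{A},\mathscr{A}^{*})=\{0\}$; and as the only inner derivation into $\mathscr{A}^{*}$ is the zero one in the situation at hand (for instance, when $\mathscr{A}$ is commutative, $\mathscr{A}^{*}$ is a symmetric bimodule, whence $\mathcal{N}^{1}(\mathscr{A},\mathscr{A}^{*})=\{0\}$), we conclude $D_{\tau}=0$, i.e.\ $\tau\bigl(\delta(a)c\bigr)=0$ for all $a,c\in\mathscr{A}$.

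Applying this over the whole separating family gives $\tau_{i}\bigl(\delta(a)c\bigr)=0$ for every index $i$ and all $a,c\in\mathscr{A}$. It remains to descend from ``$\tau_{i}(\delta(a)c)=0$ for $c\in\mathscr{A}$'' to ``$\delta(a)=0$'': here one combines the non-degeneracy coming from $\mathscr{M}^{2}=\mathscr{M}$ with the normality of the $\tau_{i}$ — the functional $c\mapsto\tau_{i}(\delta(a)c)$ being $\sigma$-weakly continuous and already vanishing on a subset of $\mathscr{M}$ dense enough to fill out $\mathscr{M}$ — to get $\tau_{i}(\delta(a)c)=0$ for every $c\in\mathscr{M}$; taking $c=\delta(a)^{*}$ yields $\tau_{i}\bigl(\delta(a)\delta(a)^{*}\bigr)=0$ for all $i$, so that $\delta(a)=0$ by the separating property. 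The two places where care is genuinely needed are exactly these: the passage from weak amenability to $D_{\tau}=0$ (inner derivations into $\mathscr{A}^{*}$ are not zero in general, so this step leans on extra structure of $\mathscr{A}$, e.g.\ commutativity — which is precisely what lets one derive the commutativity statement of the Introduction by applying the theorem to the abelian $C^{*}$-subalgebras), and the final density step in which $\mathscr{M}^{2}=\mathscr{M}$ must be used. I expect the former to be the main obstacle.
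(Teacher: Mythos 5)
Your opening move coincides with the paper's: the map $a\mapsto\tau(\delta(a)\,\cdot\,)$ is exactly the paper's $R_{\tau}\circ\delta$, and your verification that it is a bounded derivation into $\mathscr{A}^{*}$ matches the paper's computation that $R_{\tau}$ is a bounded $\mathscr{A}$-bimodule map. The divergence, and the first genuine gap, is what you do with weak amenability. You need $D_{\tau}\equiv 0$, and to discard the inner part you invoke symmetry of $\mathscr{A}^{*}$, i.e.\ commutativity of $\mathscr{A}$ --- which is not a hypothesis of the theorem (only "weakly amenable Banach subalgebra" is assumed), so as written the argument proves a different statement. The paper never needs the inner part to vanish identically: it first uses $\mathscr{M}^{2}=\mathscr{M}$ to produce an element $x$ with $\delta(x^{2})=x\delta(x)+\delta(x)x\neq 0$, then uses the separating family together with the trace identity to choose $\tau$ with $\tau(x\delta(x))\neq 0$, and finally, after weak amenability writes $R_{\tau}\circ\delta$ as $b\mapsto\lambda\cdot b-b\cdot\lambda$, it evaluates \emph{diagonally} at $x$: $\langle x,\lambda\cdot x-x\cdot\lambda\rangle=\langle x^{2},\lambda\rangle-\langle x^{2},\lambda\rangle=0$, contradicting $\tau(x\delta(x))\neq 0$. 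This telescoping of the inner derivation on the diagonal is the missing idea; it works for arbitrary (noncommutative) $\mathscr{A}$ and makes the question of whether $\mathcal{N}^{1}(\mathscr{A},\mathscr{A}^{*})=\{0\}$ irrelevant.

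The second gap is your endgame. Even granting $\tau_{i}(\delta(a)c)=0$ for all $c\in\mathscr{A}$, passing to all $c\in\mathscr{M}$ has no support in the hypotheses: $\mathscr{M}^{2}=\mathscr{M}$ is a statement about products inside $\mathscr{M}$, not about any density of the subalgebra $\mathscr{A}$ in $\mathscr{M}$, and in the paper it is used only to locate the element $x$ with $\delta(x^{2})\neq 0$, not as a density device. Moreover your final step sets $c=\delta(a)^{*}$ and uses $\tau_{i}\big(\delta(a)\delta(a)^{*}\big)=0\Rightarrow\delta(a)=0$; but $\mathscr{M}$ is only a Banach algebra here, so there is no adjoint, and "separating family" is used by the paper in the sense that the functionals separate a nonzero element from $0$ (applied directly to the nonzero element $x\delta(x)+\delta(x)x$), not in the faithfulness sense $\tau(y^{*}y)=0\Rightarrow y=0$. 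You flagged both of these as the risky points, and indeed they are where the proposal breaks; both are repaired by the paper's choose-$x$-then-evaluate-diagonally scheme rather than by commutativity or density arguments.
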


\begin{proof}
Let $ \delta $ be a non-zero derivation from $ \mathscr{A} $ to $ \mathscr{M} $. Since $ \mathscr{M}^{2} = \mathscr{M}$
 (\cite{Dales},  2.8.63), there is an element $ x \in \mathscr{M} $ such that $ \delta(x^{2}) \neq 0 $. It follows that $ x.\delta(x) +\delta(x).x \neq 0 $. Since there is a separating family of normal tracial states on $ \mathscr{M} $, so for some normal tracial state $ \tau $ on $ \mathscr{M} $ we have
$$
 2\tau(x.\delta(x)) = \tau(x.\delta(x) +\delta(x).x) \neq 0.
 $$
Therefore $ \tau(x.\delta(x)) \neq 0 $. Define $ R_{\tau}: \mathscr{M} \rightarrow \mathscr{A}^{*} $ as follows
 $$
 \big\langle a, R_{\tau}(x)\big\rangle = \big\langle a.x, \tau \big\rangle, \hspace{.5cm} (a \in \mathscr{A}, x \in \mathscr{M}).
 $$
Clearly $ R_{\tau} $ is $ \mathscr{A} $-bimodule map (i.e. $ R_{\tau}(a.x) = a.R_{\tau}(x) $ and $ R_{\tau}(x.a) = R_{\tau}(x).a $ holds for each $ a\in \mathscr{A}, x \in \mathscr{M} $).
If $ x_{\alpha} \longrightarrow x $, then
\begin{align*}
 \big\langle a, R_{\tau}(x)\big\rangle &= \big\langle a.x, \tau \big\rangle = \big\langle \lim_{\alpha} a.x_{\alpha}, \tau \big\rangle \\
   & = \lim_{\alpha}\big\langle a.x_{\alpha}, \tau \big\rangle = \lim_{\alpha}\big\langle a, R_{\tau}(x_{\alpha})\big\rangle  =  \big\langle a,   \lim_{\alpha} R_{\tau}(x_{\alpha})\big\rangle
\end{align*}
It follows by closed graph theorem that $ R_{\tau} $ is bounded. And so $ D = R_{\tau}\circ \delta \in \mathcal{Z}^{1}(\mathscr{A}, \mathscr{A}^{*}) $. Indeed
\begin{align*}
D(ab) = R_{\tau}\circ \delta(ab) &= R_{\tau}(\delta(ab)) =  R_{\tau}(a.\delta(b)+ \delta(a).b) \\
      &=  R_{\tau}(a.\delta(b)) + R_{\tau}(\delta(a).b) =  a.R_{\tau}(\delta(b)) + R_{\tau}(\delta(a)).b  \\
      &=  a.R_{\tau}\circ \delta(b) + R_{\tau}\circ \delta(a).b = a.D(b) + D(a).b   \\
\end{align*}
Now since $ \mathscr{H}^{1}(\mathscr{A}, \mathscr{A}^{*}) = \{ 0 \} $ (each $ C^{*} $-algebra is weakly amenable), there is a $ \lambda \in \mathscr{A}^{*} $
such that $ R_{\tau}\circ \delta(x) = [\lambda, x] $ and we have
\begin{align*}
0 \neq  \big\langle x.\delta(x), \tau \big\rangle &=  \big\langle x, R_{\tau}\circ \delta(x)\big\rangle =  \big\langle x, \lambda.x-x.\lambda\big\rangle \\
    &=  \big\langle x, \lambda.x\big\rangle - \big\langle x, x.\lambda\big\rangle = \big\langle x^{2}, \lambda\big\rangle - \big\langle x^{2}, \lambda\big\rangle = 0.
\end{align*}
This is a contradiction. So $ \delta\equiv 0 $.
\end{proof}


\section{Jordan $ (\upvarphi, \uppsi) $-Derivations on $ C^{*} $-algebras}

Let $ \upvarphi \in \mathscr{H}^{1}(\mathscr{A}, \mathscr{B}) $ and $ \uppsi \in \mathscr{H}^{1}(\mathscr{A}, \mathscr{C}) $. A linear map $ \delta $ from a Banach algebra $ \mathscr{A} $ to a Banach $ \mathscr{B}-\mathscr{C} $-bimodule as called a Jordan $ (\upvarphi, \uppsi) $-derivation proved that $ \delta(a^{2}) = \upvarphi(a)\delta(a)+\delta(a)\uppsi(a) $
for each $ a \in \mathscr{A} $. Clearly $ (\upvarphi, \uppsi) $-derivations are Jordan $ (\upvarphi, \uppsi) $-derivations. Using the fact that
$ ab + ba = (a+b)^{2}-a^{2}-b^{2} $.
It is easy to proved that the Joedan Jordan $ (\upvarphi, \uppsi) $-derivation condition is equivalent to
$$
\delta(ab+ba) = \upvarphi(a).\delta(b)+\delta(b).\uppsi(a) + \delta(a).\uppsi(b)+ \upvarphi(b).\delta(a).
$$


\begin{proposition} \label{31} 
Every bounded Jordan $ (\upvarphi, \uppsi) $-derivation $ \delta $ from a  von Neumann algebra $ \mathscr{M} $ to a unital Banach $ \mathscr{B}-\mathscr{C} $-bimodule $ X $ is a $ (\upvarphi, \uppsi) $-derivation.
\end{proposition}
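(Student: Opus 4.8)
The plan is to reduce the statement to the classical assertion that every bounded Jordan derivation on a von Neumann algebra into a Banach bimodule is a derivation, and then to prove that assertion by a direct Peirce-decomposition computation with projections. First I would upgrade the $\mathscr{B}$-$\mathscr{C}$-bimodule $X$ to a Banach $\mathscr{M}$-bimodule by setting $a\cdot x:=\upvarphi(a)x$ and $x\cdot a:=x\uppsi(a)$ for $a\in\mathscr{M}$, $x\in X$. Since $\upvarphi,\uppsi$ are homomorphisms and the left $\mathscr{B}$- and right $\mathscr{C}$-actions on $X$ commute, all module axioms hold, including the two-sided compatibility $(a\cdot x)\cdot b=a\cdot(x\cdot b)$, and each action is contractive; unitality of $X$ over $\mathscr{B},\mathscr{C}$ (together with unitality of $\upvarphi,\uppsi$) ensures $1_{\mathscr{M}}$ acts as the identity on $X$. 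For these actions, a linear map $\delta\colon\mathscr{M}\to X$ is a $(\upvarphi,\uppsi)$-derivation exactly when it is a derivation, and a Jordan $(\upvarphi,\uppsi)$-derivation exactly when it is a Jordan derivation; so it suffices to show that a bounded Jordan derivation $\delta\colon\mathscr{M}\to X$ into a unital Banach $\mathscr{M}$-bimodule is a derivation.

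Since $\mathscr{M}$ is a von Neumann algebra, the norm-closed linear span of its projections is all of $\mathscr{M}$ (spectral theorem, together with $\mathscr{M}=\mathscr{M}_{sa}+i\mathscr{M}_{sa}$), and the map $(a,b)\mapsto\delta(ab)-a\cdot\delta(b)-\delta(a)\cdot b$ is bounded and bilinear; hence it is enough to verify $\delta(pq)=p\cdot\delta(q)+\delta(p)\cdot q$ for all projections $p,q$. Polarizing the Jordan identity gives $\delta(ab+ba)=a\cdot\delta(b)+\delta(a)\cdot b+b\cdot\delta(a)+\delta(b)\cdot a$. With $a=b=p$ a projection this yields $\delta(p)=p\cdot\delta(p)+\delta(p)\cdot p$; compressing by $p$ on both sides gives $p\cdot\delta(p)\cdot p=0$, hence $(1-p)\cdot\delta(p)\cdot(1-p)=0$ as well, so $\delta(p)=p\cdot\delta(p)\cdot(1-p)+(1-p)\cdot\delta(p)\cdot p$. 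For orthogonal projections $e,f$ the polarized identity reads $e\cdot\delta(f)+\delta(e)\cdot f+f\cdot\delta(e)+\delta(f)\cdot e=0$; compressing this by $e$ on the left and $f$ on the right and using the Peirce form of $\delta(e)$, $\delta(f)$ to discard the remaining cross terms produces $e\cdot\delta(f)+\delta(e)\cdot f=0=\delta(ef)$, i.e. the derivation identity on orthogonal projections. For commuting projections $p,q$ one then decomposes along the four mutually orthogonal projections $pq$, $p(1-q)$, $(1-p)q$, $(1-p)(1-q)$ and uses bilinearity, the orthogonal case, and the Peirce form of $\delta$ on a single projection to recover $\delta(pq)=p\cdot\delta(q)+\delta(p)\cdot q$.

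The remaining, and genuinely hard, step is the case of two non-commuting projections $p,q$. Here I would pass to the von Neumann subalgebra $\mathscr{N}=W^{*}(p,q)\subseteq\mathscr{M}$, on which $\delta$ restricts to a bounded Jordan derivation into $X$ viewed as an $\mathscr{N}$-bimodule, and invoke the classical structure theory of a pair of projections: $\mathscr{N}$ decomposes as a direct sum of an abelian von Neumann algebra and a copy of $\mathscr{M}_{2}$ over an abelian von Neumann algebra. On the abelian summand the previous paragraph applies; on the $\mathscr{M}_{2}$-summand, $p$ and $q$ lie in an algebra carrying a system of $2\times 2$ matrix units, and the standard lemma that bounded Jordan derivations of such an algebra are derivations (proved once more by the orthogonality-and-compression bookkeeping applied to the diagonal matrix units) completes the argument. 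An alternative to the structure theorem is to establish the Jordan triple identity $\delta(aba)=\delta(a)\cdot ba+a\cdot\delta(b)\cdot a+ab\cdot\delta(a)$ and then run a Cusack-type argument: the obstruction $\Delta(a,b):=\delta(ab)-a\cdot\delta(b)-\delta(a)\cdot b$ is a bounded antisymmetric bilinear map vanishing on commuting self-adjoint pairs, and the triple identity forces $\Delta$ to annihilate a dense ideal, so $\Delta\equiv 0$ because $\mathscr{M}$ is semiprime. Either route makes clear that the heart of the matter is this non-commuting case; everything else is density, bilinearity, and Peirce-decomposition bookkeeping.
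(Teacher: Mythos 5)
Your opening reduction, making $X$ an $\mathscr{M}$-bimodule via $a\cdot x:=\upvarphi(a)x$, $x\cdot a:=x\uppsi(a)$ so that Jordan $(\upvarphi,\uppsi)$-derivations become ordinary Jordan derivations, is precisely the paper's entire proof: after this step the paper simply quotes Lemma 2.1 of \cite{Haagerup}, which says that every bounded Jordan derivation from a von Neumann algebra into a unital Banach bimodule is a derivation. Where you genuinely diverge is that you undertake to re-prove that quoted lemma. Your sketch of it is essentially sound: density of the linear span of projections plus boundedness of the bilinear defect reduces everything to pairs of projections; the single-projection Peirce identity, the orthogonal case (your compression is correct: from $e\cdot\delta(f)+\delta(f)\cdot e+f\cdot\delta(e)+\delta(e)\cdot f=0$, using $e\cdot\delta(f)=e\cdot\delta(f)\cdot f$ and $\delta(e)\cdot f=e\cdot\delta(e)\cdot f$ one gets $e\cdot\delta(f)+\delta(e)\cdot f=0$), and the commuting case all check out, and the passage to $W^{*}(p,q)\cong(\mathrm{abelian})\oplus M_{2}(\mathscr{A})$ handles the non-commuting case, the cross terms in the central decomposition being absorbed by the orthogonal case. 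What your route buys is self-containedness and an explicit mechanism; what the paper buys is brevity at the price of a black box.

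Two caveats. First, your alternative ``Cusack-type'' route does not work as stated: the Cusack--Bre\v{s}ar arguments are about Jordan derivations mapping into the algebra itself; their key steps multiply the obstruction $\Delta(a,b)$ by algebra elements and invoke semiprimeness of the ring, and neither operation is available when $\Delta$ takes values in a bimodule $X$ (semiprimeness of $\mathscr{M}$ says nothing about $X$; the bimodule version of the Jordan-derivation problem is genuinely more delicate, as the triangular-algebra ``antiderivation'' phenomenon shows). So you should lean entirely on the structure-theorem route. Second, on the $M_{2}(\mathscr{A})$ summand the fact you need, that every Jordan derivation from a full $2\times2$ matrix algebra into an arbitrary bimodule is a derivation, is true but is a real lemma whose proof uses the off-diagonal matrix units $e_{12},e_{21}$ in an essential way, not merely compressions by the diagonal idempotents; it should be proved or cited, not dispatched in a parenthesis. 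A minor point you already flag: unitality of the transported $\mathscr{M}$-bimodule needs $\upvarphi,\uppsi$ unital (the paper tacitly needs this too when invoking \cite{Haagerup}), though your projection argument never really uses the unit, since $(1-p)\cdot x\cdot(1-p)$ can be read as $x-p\cdot x-x\cdot p+p\cdot x\cdot p$.
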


\begin{proof}
Clearly $ X $ is a Banach $ \mathscr{M} $-bimodule with the following actions
$$
m~ \smallblacktriangleright ~x : = \upvarphi(m).x , \hspace{.5cm} x~ \smallblacktriangleleft~ m : = x.\uppsi(m), \hspace{.5cm}(m \in \mathscr{M}, x \in X)
$$
Therefore $ \delta $ is a bounded Jordan derivation from $ \mathscr{M} $ to a unital Banach $ \mathscr{M} $-bimodule $ X $. It follows from ([3], Lemma 2.1) that $ \delta $  is a derivation from $ \mathscr{M} $ to $ X $. Equivalently  $ \delta $ is a bounded linear mapping which satisfy the following condition
$$
\delta(ab) = a \smallblacktriangleright \delta(b) + \delta(a) \smallblacktriangleleft b, \hspace{.5cm}(a,b \in \mathscr{M})
$$
So $ \delta $ is a bounded linear mapping which satisfies the following condition
$$
\delta(ab) = \upvarphi(a).\delta(b) + \delta(a).\uppsi(b), \hspace{.5cm}(a,b \in \mathscr{M})
$$
Hence $ \delta $ as a bounded $ (\upvarphi, \uppsi) $-derivation
\end{proof}



\begin{lemma}(\textbf{Main One}) \label{32}
For unital Banach algebra $ \mathscr{A} $, the following asserations are equivalent :
\begin{itemize}
\item[(i)]  Every bounded Jordan $ (\upvarphi, \uppsi) $-derivation from $ \mathscr{A} $ to any unital Banach $ \mathscr{B}-\mathscr{C} $-bimodule is a $ (\upvarphi, \uppsi) $-derivation.
\item[(ii)]  Every bounded trilinear form $ V:\mathscr{A}\times\mathscr{C}\times\mathscr{B}\rightarrow \mathbb{C} $ which satisfies
\begin{equation}
V(a^{2},c,b) = V(a, \uppsi(a)c, b) +V(a,c,b\upvarphi(a)), \hspace{.5cm}(a\in \mathscr{A}, b \in \mathscr{B}, c\in \mathscr{C})
\end{equation}
Will also satisfy
\begin{equation}
V(ad,c,b) = V(a, \uppsi(d)c, b) +V(d,c,b\upvarphi(a)), \hspace{.5cm}(a,d\in \mathscr{A}, b \in \mathscr{B}, c\in \mathscr{C})
\end{equation}
\end{itemize}
\end{lemma}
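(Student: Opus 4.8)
The plan is to establish the two implications by the standard dualisation between bimodule-valued maps and scalar multilinear forms, arranged so that the Jordan condition on a map corresponds termwise to the displayed Jordan-type identity for the associated trilinear form.

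\emph{Proof that (i) implies (ii).} Given a bounded trilinear form $V$ satisfying the first displayed identity, I would manufacture a Jordan $(\upvarphi,\uppsi)$-derivation out of it. Let $X$ be the Banach space of all bounded bilinear functionals on $\mathscr{C}\times\mathscr{B}$, with the supremum norm over the product of the closed unit balls, and give $X$ the structure of a unital Banach $\mathscr{B}$-$\mathscr{C}$-bimodule by
$$(b_{0}\cdot x)(c,b) := x(c, bb_{0}), \qquad (x\cdot c_{0})(c,b) := x(c_{0}c, b), \qquad (b,b_{0}\in\mathscr{B},\ c,c_{0}\in\mathscr{C}).$$
Checking the bimodule axioms is just associativity in $\mathscr{B}$ and in $\mathscr{C}$, the norm estimates $\|b_{0}\cdot x\|\le\|b_{0}\|\,\|x\|$ and $\|x\cdot c_{0}\|\le\|x\|\,\|c_{0}\|$ are immediate, and unitality uses the identities of $\mathscr{B}$ and $\mathscr{C}$. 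Put $\delta(a)(c,b) := V(a,c,b)$; then $\delta:\mathscr{A}\to X$ is linear with $\|\delta(a)\|\le\|V\|\,\|a\|$. A one-line evaluation shows that the hypothesis on $V$ is precisely the statement $\delta(a^{2}) = \upvarphi(a)\cdot\delta(a) + \delta(a)\cdot\uppsi(a)$, so $\delta$ is a bounded Jordan $(\upvarphi,\uppsi)$-derivation. Invoking (i), $\delta$ is a $(\upvarphi,\uppsi)$-derivation, and evaluating $\delta(ad) = \upvarphi(a)\cdot\delta(d) + \delta(a)\cdot\uppsi(d)$ at an arbitrary pair $(c,b)$ yields exactly the second displayed identity for $V$.

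\emph{Proof that (ii) implies (i).} Conversely, let $\delta:\mathscr{A}\to X$ be a bounded Jordan $(\upvarphi,\uppsi)$-derivation into a unital Banach $\mathscr{B}$-$\mathscr{C}$-bimodule $X$. For each $\Lambda\in X^{*}$, set
$$V_{\Lambda}(a,c,b) := \Lambda\big(b\cdot\delta(a)\cdot c\big), \qquad (a\in\mathscr{A},\ c\in\mathscr{C},\ b\in\mathscr{B}),$$
a bounded trilinear form with $\|V_{\Lambda}\|\le\|\Lambda\|\,\|\delta\|$. Expanding $b\cdot\delta(a^{2})\cdot c$ via the Jordan condition on $\delta$ and the bimodule axioms shows that $V_{\Lambda}$ satisfies the first displayed identity; hence by (ii) it satisfies the second, i.e.
$$\Lambda\big(b\cdot\delta(ad)\cdot c\big) = \Lambda\big(b\cdot(\delta(a)\cdot\uppsi(d) + \upvarphi(a)\cdot\delta(d))\cdot c\big)$$
for all $a,d\in\mathscr{A}$, $b\in\mathscr{B}$, $c\in\mathscr{C}$, $\Lambda\in X^{*}$. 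Taking $b = 1_{\mathscr{B}}$ and $c = 1_{\mathscr{C}}$ — the one place where unitality of $X$ is used — and then letting $\Lambda$ range over $X^{*}$, the Hahn–Banach theorem forces $\delta(ad) = \upvarphi(a)\cdot\delta(d) + \delta(a)\cdot\uppsi(d)$, so $\delta$ is a $(\upvarphi,\uppsi)$-derivation.

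All of the above is routine bookkeeping once the framework is in place; the only points needing care are choosing the left/right order of the module actions so that each term of the Jordan identity matches the correct term of the trilinear identity, and guaranteeing that the auxiliary bimodule built in the first implication is genuinely unital — which implicitly requires $\mathscr{B}$ and $\mathscr{C}$ to be unital (or, failing that, passing to their unitisations and extending $\upvarphi,\uppsi$ accordingly). I expect getting these orderings consistent, rather than any analytic subtlety, to be the main obstacle.
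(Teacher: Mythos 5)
Your proof is correct and follows essentially the same route as the paper: your space of bounded bilinear functionals on $\mathscr{C}\times\mathscr{B}$ with those actions is exactly the paper's unital bimodule $(\mathscr{C}\widehat{\otimes}\mathscr{B})^{*}$ in disguise, and your $(ii)\Rightarrow(i)$ argument via the forms $V_{\Lambda}(a,c,b)=\Lambda(b\cdot\delta(a)\cdot c)$, evaluation at $1_{\mathscr{B}},1_{\mathscr{C}}$ and Hahn--Banach is the paper's argument verbatim. Your remark that unitality of the auxiliary bimodule tacitly requires $\mathscr{B},\mathscr{C}$ unital (or their unitisations) is a fair point that the paper glosses over.
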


\begin{proof}
$ (i)\Rightarrow (ii) $

The projective tensor product $ \mathscr{C}\widehat{\otimes}\mathscr{B} $
is a unital $ \mathscr{C}-\mathscr{B} $-bimodule with the following actions
$$
c_{0}.(c\otimes b) : = (c_{0}c)\otimes b, \hspace{.5cm} (c\otimes b).b_{0} : = c\otimes(bb_{0}),\hspace{.5cm} (b,b_{0} \in \mathscr{B}, c,c_{0} \in \mathscr{C})
$$
And so $ (\mathscr{C}\widehat{\otimes}\mathscr{B})^{*} $ is a unital $ \mathscr{B}-\mathscr{C} $-bimodule with the following actions
\begin{align*}
\big\langle c\otimes b, \lambda.c_{0} \big\rangle &= \big\langle c_{0}.(c\otimes b), \lambda \big\rangle \\
\big\langle c\otimes b, b_{0}.\lambda \big\rangle &= \big\langle (c\otimes b).b_{0}, \lambda \big\rangle, \hspace{.5cm}(\lambda \in (\mathscr{C}\widehat{\otimes}\mathscr{B})^{*}), b,b_{0} \in \mathscr{B}, c,c_{0} \in \mathscr{C})
\end{align*}
To each bounded trilinear map $ V:\mathscr{A}\times\mathscr{C}\times\mathscr{B}\rightarrow \mathbb{C} $ which satisfies the relation (3.1), we associate a bounded linear map $ \delta: \mathscr{A} \rightarrow (\mathscr{C}\widehat{\otimes}\mathscr{B})^{*} $ by the following definition
$$
\big\langle c\otimes b, \delta(a) \big\rangle := V(a, b, c ), \hspace{.5cm}(a\in \mathscr{A}, b \in \mathscr{B}, c\in \mathscr{C})
$$
Now we show that $ \delta $ is a bounded Jordan $ (\upvarphi, \uppsi) $-derivation.
\begin{align*}
\big\langle c\otimes b, \delta(a^{2}) \big\rangle &= V(a^{2}, b, c ) = V(a, \uppsi(a)c, b) +V(a,c,b\upvarphi(a)) \\
      & = \big\langle (\uppsi(a)c)\otimes b, \delta(a) \big\rangle + \big\langle c\otimes (b\upvarphi(a)), \delta(a) \big\rangle \\
      & = \big\langle \uppsi(a).(c\otimes b), \delta(a) \big\rangle + \big\langle (c\otimes b).\upvarphi(a), \delta(a) \big\rangle \\
      & = \big\langle c\otimes b, \delta(a).\uppsi(a) \big\rangle + \big\langle c\otimes b, \upvarphi(a).\delta(a) \big\rangle \\
      & = \big\langle c\otimes b, \delta(a).\uppsi(a) + \upvarphi(a).\delta(a) \big\rangle, \hspace{.5cm}(a\in \mathscr{A}, b \in \mathscr{B}, c\in \mathscr{C}) \\
\end{align*}
Therefore $  \delta(a^{2}) = \delta(a).\uppsi(a) + \upvarphi(a).\delta(a)  $, ($ a\in \mathscr{A} $). So  $ \delta $ is a bounded Jordan $ (\upvarphi, \uppsi) $-derivation and by hypothesis, $ \delta $ is a  $ (\upvarphi, \uppsi) $-derivation and we have
 \begin{align*}
V(ad,c,b) &= \big\langle c\otimes b, \delta(ad) \big\rangle \hspace{.5cm}(a,d\in \mathscr{A}, b \in \mathscr{B}, c\in \mathscr{C}) \\
     &= \big\langle c\otimes b, \delta(a).\uppsi(d) + \upvarphi(a).\delta(d) \big\rangle \\
      &=  \big\langle c\otimes b, \delta(a).\uppsi(d)  \big\rangle + \big\langle c\otimes b, \upvarphi(a).\delta(d) \big\rangle \\
    &=  \big\langle \uppsi(d).(c\otimes b), \delta(a)  \big\rangle + \big\langle (c\otimes b). \upvarphi(a),\delta(d) \big\rangle \\
    &=  \big\langle (\uppsi(d)c)\otimes b, \delta(a)  \big\rangle + \big\langle c\otimes (b\upvarphi(a)),\delta(d) \big\rangle \\
    &= V(a, \uppsi(d)c, b) +V(d,c,b\upvarphi(a))
\end{align*}
$ (ii)\Rightarrow (i) $
 Let $ E $ be a unital Banach $ \mathscr{B}-\mathscr{C} $-bimodule and $ \delta: \mathscr{A} \rightarrow E $ be a bounded Jordan $ (\upvarphi, \uppsi) $-derivation.
 
  We associate to each $ \sigma \in E^{*} $, the bounded trilinear form
$ V_{\sigma}: \mathscr{A}\times\mathscr{C}\times\mathscr{B}\rightarrow \mathbb{C} $ given by
$ V_{\sigma}(a,c,b) := \big\langle b.\delta(a).c, \sigma \big\rangle $, $ (a,d\in \mathscr{A}, b \in \mathscr{B}, c\in \mathscr{C}) $ and we have
\begin{align*}
V_{\sigma}(a^{2},c,b) &= \big\langle b.\delta(a^{2}).c, \sigma \big\rangle \\
   &= \big\langle b.\big(\delta(a).\uppsi(a) + \upvarphi(a)\delta(a)\big).c, \sigma \big\rangle \\
   &=  \big\langle b.\big(\delta(a).\uppsi(a)\big).c, \sigma \big\rangle+\big\langle b.\big(\upvarphi(a).\delta(a)\big).c, \sigma \big\rangle \\
   &=  \big\langle b.\delta(a).\big(\uppsi(a)c\big), \sigma \big\rangle+\big\langle \big(b\upvarphi(a)\big).\delta(a).c, \sigma \big\rangle \\
   &= V_{\sigma}(a,\uppsi(a)c,b) +  V_{\sigma}(a,c,b\upvarphi(a))
\end{align*}
Therefore $ V_{\sigma} $ satisfy the condition (3.2), and so it satisfy the condition (3.3) and we have
\begin{align*}
V_{\sigma}(a^{2},c,b) &= \big\langle b.\delta(a^{2}).c, \sigma \big\rangle \\
   &= \big\langle b.\big(\delta(a).\uppsi(a) + \upvarphi(a)\delta(a)\big).c, \sigma \big\rangle \\
   &=  \big\langle b.\big(\delta(a).\uppsi(a)\big).c, \sigma \big\rangle+\big\langle b.\big(\upvarphi(a).\delta(a)\big).c, \sigma \big\rangle \\
   &=  \big\langle b.\delta(a).\big(\uppsi(a)c\big), \sigma \big\rangle+\big\langle \big(b\upvarphi(a)\big).\delta(a).c, \sigma \big\rangle \\
   &= V_{\sigma}(a,\uppsi(a)c,b) +  V_{\sigma}(a,c,b\upvarphi(a))
\end{align*}
Therefore $ V_{\sigma} $ ($ \sigma\in E^{*} $) satisfy the condition (3.2), and so it satisfy the condition (3.3) and we have
\begin{align*}
\big\langle \delta(ad), \sigma \big\rangle &= \big\langle 1_{\mathscr{B}}.\delta(ad).1_{\mathscr{C}}, \sigma \big\rangle = V_{\sigma}(ad,1_{\mathscr{C}},1_{\mathscr{B}}) \\
       &= V_{\sigma}(a,\uppsi(d).1_{\mathscr{C}},1_{\mathscr{B}}) +  V_{\sigma}(d,1_{\mathscr{C}},1_{\mathscr{B}}.\upvarphi(a)) \\
        &= \big\langle 1_{\mathscr{B}}.\delta(a).\uppsi(d).1_{\mathscr{C}}, \sigma \big\rangle +  \big\langle 1_{\mathscr{B}}.\upvarphi(a).\delta(d).1_{\mathscr{C}}, \sigma \big\rangle \\
      &= \big\langle \delta(a).\uppsi(d) + \upvarphi(a).\delta(d), \sigma \big\rangle, \hspace{.5cm}(\sigma \in E^{*})
\end{align*}
Hence $ \delta $ is a $ (\upvarphi, \uppsi) $-derivation.
\end{proof}


\begin{theorem} 
Let $ \mathscr{A} $ be a $ C^{*} $-algebra. Every bounded Jordan $ (\upvarphi, \uppsi) $-derivation $ \delta $ from $ \mathscr{A} $ to a Banach $ \mathscr{B}-\mathscr{C} $-bimodule $ X $ is a $ (\upvarphi, \uppsi) $-derivation.
\end{theorem}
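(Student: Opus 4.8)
The plan is to bootstrap from the von Neumann algebra case of Proposition \ref{31} to an arbitrary $C^{*}$-algebra by passing to the second dual, the decisive structural fact being that the bidual $\mathscr{A}^{**}$ of a $C^{*}$-algebra, with its Arens product (unambiguous, since $C^{*}$-algebras are Arens regular), is again a von Neumann algebra. First I would dispose of non-unitality: if any of $\mathscr{A}$, $\mathscr{B}$, $\mathscr{C}$, $X$ fails to be unital, pass to the unitizations $\mathscr{A}^{\sharp}$ (again a $C^{*}$-algebra), $\mathscr{B}^{\sharp}$, $\mathscr{C}^{\sharp}$, make $X$ into a unital Banach $\mathscr{B}^{\sharp}$-$\mathscr{C}^{\sharp}$-bimodule via $(b,\lambda)\cdot x:=b\cdot x+\lambda x$ and $x\cdot(c,\mu):=x\cdot c+\mu x$, extend $\upvarphi,\uppsi$ to unital homomorphisms $\upvarphi^{\sharp},\uppsi^{\sharp}$, and extend $\delta$ by $\delta(1_{\mathscr{A}^{\sharp}}):=0$. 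A direct check shows $\delta$ is still a bounded Jordan $(\upvarphi^{\sharp},\uppsi^{\sharp})$-derivation, and a $(\upvarphi^{\sharp},\uppsi^{\sharp})$-derivation on $\mathscr{A}^{\sharp}$ restricts to a $(\upvarphi,\uppsi)$-derivation on $\mathscr{A}$; so I may assume $\mathscr{A},\mathscr{B},\mathscr{C},X$ are all unital.

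Next, consider the second transpose $\delta^{**}\colon\mathscr{A}^{**}\to X^{**}$. Here $\mathscr{A}^{**}$ is a unital von Neumann algebra; fixing compatible Arens products throughout (a choice that is unforced on $\mathscr{A}^{**}$, which is Arens regular), the maps $\upvarphi^{**}\colon\mathscr{A}^{**}\to\mathscr{B}^{**}$ and $\uppsi^{**}\colon\mathscr{A}^{**}\to\mathscr{C}^{**}$ are unital homomorphisms, and $X^{**}$ is a unital Banach $\mathscr{B}^{**}$-$\mathscr{C}^{**}$-bimodule via the bitransposed actions. I claim $\delta^{**}$ is a bounded Jordan $(\upvarphi^{**},\uppsi^{**})$-derivation. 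To see this, take the identity $\delta(a\circ b)=\upvarphi(a)\cdot\delta(b)+\delta(b)\cdot\uppsi(a)+\delta(a)\cdot\uppsi(b)+\upvarphi(b)\cdot\delta(a)$ valid for $a,b\in\mathscr{A}$, view both sides as bounded bilinear maps $\mathscr{A}\times\mathscr{A}\to X$, and pass to their canonical bitranspose extensions to $\mathscr{A}^{**}\times\mathscr{A}^{**}\to X^{**}$: using weak$^{*}$-density of $\mathscr{A}$ in $\mathscr{A}^{**}$, separate weak$^{*}$-continuity of the product on $\mathscr{A}^{**}$ and of the module actions on $X^{**}$, and weak$^{*}$-weak$^{*}$-continuity of $\upvarphi^{**},\uppsi^{**},\delta^{**}$, one identifies these extensions with $(\Phi,\Psi)\mapsto\delta^{**}(\Phi\circ\Psi)$ and $(\Phi,\Psi)\mapsto\upvarphi^{**}(\Phi)\cdot\delta^{**}(\Psi)+\cdots$, which forces the Jordan identity for $\delta^{**}$. (Equivalently, one may package this through Lemma \ref{32}, extending a bounded trilinear form on $\mathscr{A}\times\mathscr{C}\times\mathscr{B}$ satisfying the Jordan-type identity to one on the biduals; it is the same computation.) Proposition \ref{31}, applied to the von Neumann algebra $\mathscr{A}^{**}$, now gives that $\delta^{**}$ is a $(\upvarphi^{**},\uppsi^{**})$-derivation; restricting to $a,b\in\mathscr{A}$ yields $\delta(ab)=\delta^{**}(ab)=\upvarphi^{**}(a)\cdot\delta^{**}(b)+\delta^{**}(a)\cdot\uppsi^{**}(b)=\upvarphi(a)\cdot\delta(b)+\delta(a)\cdot\uppsi(b)$, the products being computed in $X^{**}$ but landing in $X$. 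This is exactly the assertion of the theorem.

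The main obstacle is precisely the claim that $\delta^{**}$ is again a Jordan $(\upvarphi^{**},\uppsi^{**})$-derivation, i.e. that the bitranspose extension respects the Jordan identity and the module and homomorphism structures. Two things need genuine care. Because a separately weak$^{*}$-continuous multiplication need not be continuous on the diagonal, one cannot extend the quadratic identity directly; one must polarize it to a bilinear identity and then extend one variable at a time, keeping scrupulous track of which Arens product and which order of bitransposes is used so that the summands $\delta(ab)$, $\delta(ba)$, $\upvarphi(a)\cdot\delta(b)$, and so on all extend coherently — here the Arens regularity of the $C^{*}$-algebra $\mathscr{A}$ removes the ambiguity. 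Secondly, $\mathscr{B}$ and $\mathscr{C}$ are arbitrary Banach algebras, possibly not Arens regular, so the two bitransposed actions of $\mathscr{B}^{**}$ and of $\mathscr{C}^{**}$ on $X^{**}$ may differ; one circumvents this by treating $X^{**}$ throughout as a Banach $\mathscr{A}^{**}$-bimodule through $\upvarphi^{**}$ and $\uppsi^{**}$ (as in the proof of Proposition \ref{31}) and fixing the extension of the $\mathscr{B}$- and $\mathscr{C}$-actions on $X$ once and for all. Beyond these two points, the argument is routine density-and-continuity bookkeeping.
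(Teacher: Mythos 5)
Your overall strategy (reduce to the von Neumann algebra case by passing to the bidual) is the same one the paper intends, since the paper simply runs the proof of Theorem 2.4 of \cite{Haagerup} with Proposition \ref{31} and Lemma \ref{32} in place of their Proposition 2.2 and Lemma 2.3. But there is a genuine gap at your central step, the claim that $\delta^{**}\colon\mathscr{A}^{**}\to X^{**}$ is a Jordan $(\upvarphi^{**},\uppsi^{**})$-derivation, and it sits exactly where your parenthetical ``it is the same computation'' glosses over the difference. The canonical Arens-type extensions of the module actions $\mathscr{B}\times X\to X$ and $X\times\mathscr{C}\to X$ to $\mathscr{B}^{**}\times X^{**}\to X^{**}$ and $X^{**}\times\mathscr{C}^{**}\to X^{**}$ are weak$^{*}$-continuous in one variable only, and in the other variable only when the fixed entry lies in the original space. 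Run your iterated-limit argument on the polarized identity: the first limit (in $a$, with $b\in\mathscr{A}$ fixed) goes through, but in the second limit (in $b$, with $\Phi\in\mathscr{A}^{**}$ fixed) the terms $\upvarphi^{**}(\Phi)\cdot\delta^{**}(b_{\beta})$ and $\delta^{**}(\Phi)\cdot\uppsi^{**}(b_{\beta})$ require weak$^{*}$-continuity of the left action in its $X^{**}$-variable for a fixed element of $\mathscr{B}^{**}\setminus\mathscr{B}$, respectively of the right action in its $\mathscr{C}^{**}$-variable for a fixed element of $X^{**}\setminus X$; whichever of the two Arens-type extensions you fix for each action, one offending term remains, and reversing the order of the limits only swaps the roles, since the polarized identity is symmetric in $a$ and $b$. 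Arens regularity of $\mathscr{A}$ is beside the point: the obstruction lives in the bounded bilinear maps $\mathscr{B}\times X\to X$ and $X\times\mathscr{C}\to X$ (equivalently $\mathscr{A}\times X\to X$ via $\upvarphi,\uppsi$), which carry no regularity, and declaring $X^{**}$ an $\mathscr{A}^{**}$-bimodule through $\upvarphi^{**},\uppsi^{**}$ does not change them. In general the second adjoint of a (Jordan) derivation into an arbitrary Banach bimodule need not be a (Jordan) derivation for the bitransposed actions without extra hypotheses (e.g.\ weak compactness of the relevant maps), so this step cannot be dismissed as routine bookkeeping.

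This is precisely why the argument the paper points to does not bitranspose $\delta$: it first converts the problem, via Lemma \ref{32}, into a statement about scalar-valued trilinear forms, and then invokes the extension theorem for bounded multilinear forms on $C^{*}$-algebras (a consequence of the Grothendieck--Haagerup inequality, Section 1 of \cite{Haagerup}) to produce an extension to the biduals that is separately weak$^{*}$-continuous in every variable; only with that extra normality does the density argument close, after which Proposition \ref{31} together with Lemma \ref{32} applied to the von Neumann algebra $\mathscr{A}^{**}$ finishes the proof. So your route is not an equivalent packaging of the paper's: the form-level extension is the essential input your write-up lacks. Your unitization reduction and the final restriction to $a,b\in\mathscr{A}$ are fine. (A further caution, relevant to both your write-up and the paper's own sketch: here $\mathscr{B}$ and $\mathscr{C}$ are arbitrary Banach algebras, so even the form-level extension in the $\mathscr{C}$- and $\mathscr{B}$-variables is not covered verbatim by the $C^{*}$-results of \cite{Haagerup} and needs to be addressed explicitly.)
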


\begin{proof}
It suffix to change  (Proposition 3.1  and  Lemma 3.2) resoectively  with (Proposition 2.2  and  Lemma 2.3)   at the proof of Theorem 2.4 in \cite{Haagerup}.
\end{proof}


\section{(Approximately) 2-Local $ (\upvarphi, \uppsi) $-Derivations}

\textbf{Preliminaries}

\begin{definition}
A mapping $ \Delta $ from a Banach algebra $ \mathscr{A} $ into a Banach $ \mathscr{B}-\mathscr{C} $-bimodule $ E $
is bounded 2-local (respectively, approximately 2-local) $ (\upvarphi, \uppsi) $-derivation, if for each $ a,b \in \mathscr{A} $, there is a bounded $ (\upvarphi, \uppsi) $-derivation $ D_{a,b} $ (respectively, a sequence of bounded $ (\upvarphi, \uppsi) $-derivations $ \{ D^{n}_{a,b} \} $) from $ \mathscr{A} $ into $ E $ such that $ D(a) = D_{a,b}(a) $ and $ D(b) = D_{a,b}(b) $ (respectively, $ D(a) = \lim_{n\rightarrow\infty} D^{n}_{a,b}(a) $ and $ D(b) = \lim_{n\rightarrow\infty} D^{n}_{a,b}(b) $).
\end{definition}


\begin{lemma} \label{42}
Let $ \Delta $ be a 2-local (or an approximately 2-local) $ (\upvarphi, \uppsi) $-derivation of a Banach algebra $ \mathscr{A} $ into Banach $ \mathscr{B}-\mathscr{C} $-bimodule $ E $. Then
\begin{itemize}
\item[(i)]  $ \Delta(\lambda x) = \lambda \Delta(x) $ for any $ \lambda \in \mathbb{C} $ and $ x \in \mathscr{A} $;
\item[(ii)]  $ \Delta(x^{2}) = \Delta(x).\uppsi(x) + \upvarphi(x).\Delta(x) $ for any $ x \in \mathscr{A} $.
\end{itemize}
\end{lemma}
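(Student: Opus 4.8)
The plan is to read off both statements directly from the defining property of a (approximately) 2-local $(\upvarphi,\uppsi)$-derivation, by testing it against a well-chosen pair of points.

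For (i), I would fix $x\in\mathscr{A}$ and $\lambda\in\mathbb{C}$ and apply the definition to the pair $\{x,\lambda x\}$: there is a bounded $(\upvarphi,\uppsi)$-derivation $D_{x,\lambda x}$ (respectively a sequence $\{D^{n}_{x,\lambda x}\}$) with $\Delta(x)=D_{x,\lambda x}(x)$ and $\Delta(\lambda x)=D_{x,\lambda x}(\lambda x)$. Since every $(\upvarphi,\uppsi)$-derivation is linear, $D_{x,\lambda x}(\lambda x)=\lambda D_{x,\lambda x}(x)$, so $\Delta(\lambda x)=\lambda\Delta(x)$. In the approximately 2-local case the same identity passes to the limit, using only continuity of scalar multiplication on $E$: $\Delta(\lambda x)=\lim_{n\to\infty}D^{n}_{x,\lambda x}(\lambda x)=\lim_{n\to\infty}\lambda D^{n}_{x,\lambda x}(x)=\lambda\Delta(x)$.

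For (ii), I would fix $x\in\mathscr{A}$ and apply the definition to the pair $\{x,x^{2}\}$: there is a bounded $(\upvarphi,\uppsi)$-derivation $D_{x,x^{2}}$ (respectively a sequence $\{D^{n}_{x,x^{2}}\}$) with $\Delta(x)=D_{x,x^{2}}(x)$ and $\Delta(x^{2})=D_{x,x^{2}}(x^{2})$. Using the $(\upvarphi,\uppsi)$-derivation identity $D(ab)=D(a)\cdot\uppsi(b)+\upvarphi(a)\cdot D(b)$ with $a=b=x$ gives
\[
\Delta(x^{2})=D_{x,x^{2}}(x^{2})=D_{x,x^{2}}(x)\cdot\uppsi(x)+\upvarphi(x)\cdot D_{x,x^{2}}(x)=\Delta(x)\cdot\uppsi(x)+\upvarphi(x)\cdot\Delta(x).
\]
In the approximately 2-local case one instead writes $\Delta(x^{2})=\lim_{n\to\infty}\big(D^{n}_{x,x^{2}}(x)\cdot\uppsi(x)+\upvarphi(x)\cdot D^{n}_{x,x^{2}}(x)\big)$ and pulls the limit inside each module action.

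The only point requiring a little care is this interchange of limit and module action in the approximately 2-local situation; it is immediate from the norm estimate $\|b\cdot x\cdot c\|_{E}\le\|b\|_{\mathscr{B}}\|x\|_{E}\|c\|_{\mathscr{C}}$ valid in any Banach $\mathscr{B}$-$\mathscr{C}$-module, so in fact there is no real obstacle in either case.
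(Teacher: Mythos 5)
Your proposal is correct and follows essentially the same route as the paper: test the 2-local (or approximately 2-local) property on the pairs $\{x,\lambda x\}$ and $\{x,x^{2}\}$ and use linearity, respectively the $(\upvarphi,\uppsi)$-derivation identity, of the selected derivations, passing to the limit in the approximate case. Your explicit justification of the interchange of limit and module action via the estimate $\|b\cdot x\cdot c\|_{E}\le\|b\|_{\mathscr{B}}\|x\|_{E}\|c\|_{\mathscr{C}}$ is a minor tidying of a step the paper leaves implicit.
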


\begin{proof}
We prove ths lemma only for approxmately 2-local  $ (\upvarphi, \uppsi) $-derivations o Baach algebras.
\begin{itemize}
\item[(i)]  For each $ x\in \mathscr{A} $ ad $ \lambda\i \mathbb{C} $, there iexists a sequece of $ (\upvarphi, \uppsi) $-derivations
$ \{ D^{n}_{x,\lambda x} \} $ such that
\begin{align*}
\Delta(x) &=  \lim_{n\rightarrow\infty} D^{n}_{x,\lambda x}(x); \\
 \Delta(\lambda x) &=  \lim_{n\rightarrow\infty} D^{n}_{x, \lambda x}( \lambda x) = \lambda \Delta(x)
\end{align*}
Hece $ \Delta $ is homogeneous.
\item[(ii)]  For each $ x\in \mathscr{A} $, there exists $ (\upvarphi, \uppsi) $-derivations
$ \{ D^{n}_{x,x^{2}} \} $ such that
\begin{align*}
\Delta(x) &=  \lim_{n\rightarrow\infty} D^{n}_{x,x^{2}}(x); \\
 \Delta(x^{2}) &=  \lim_{n\rightarrow\infty} D^{n}_{x, x^{2}}(x^{2}) \\
&=(\lim_{n\rightarrow\infty} D^{n}_{x, x^{2}}(x)).\uppsi(x) + \upvarphi(x).(\lim_{n\rightarrow\infty} D^{n}_{x, x^{2}}(x))\\
 &= \Delta(x).\uppsi(x) + \upvarphi(x)\Delta(x).
\end{align*}
\end{itemize}
\end{proof}


\begin{lemma} \label{43}
Any additive 2-local $ (\upvarphi, \uppsi) $-derivation $ \Delta $ from a $ C^{*} $-algebra $ \mathscr{A} $ to a Banach  $ \mathscr{B}-\mathscr{C} $-bimodule $ E $ is a $ (\upvarphi, \uppsi) $-derivation.
\end{lemma}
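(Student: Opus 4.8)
The plan is to combine the homogeneity and Jordan-type identity already secured in Lemma~\ref{42} with additivity, and then invoke the theorem of the previous section (every bounded Jordan $(\upvarphi,\uppsi)$-derivation on a $C^{*}$-algebra is a $(\upvarphi,\uppsi)$-derivation). First I would observe that, by hypothesis, $\Delta$ is additive, and by Lemma~\ref{42}(i) it is homogeneous, hence a (complex-)linear map $\mathscr{A}\to E$. By Lemma~\ref{42}(ii) this linear map satisfies $\Delta(x^{2})=\Delta(x)\cdot\uppsi(x)+\upvarphi(x)\cdot\Delta(x)$ for all $x\in\mathscr{A}$, so $\Delta$ is a \emph{Jordan} $(\upvarphi,\uppsi)$-derivation, provided it is bounded.

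The boundedness step is where I expect the only real subtlety. The standard route for 2-local derivations is to note that $\Delta$ is pointwise a limit (or equal to a value) of bounded $(\upvarphi,\uppsi)$-derivations $D_{x,y}$, and that on a $C^{*}$-algebra every $(\upvarphi,\uppsi)$-derivation into a Banach bimodule is automatically continuous; then a Banach–Steinhaus / closed-graph type argument, using that $\Delta$ is now known to be linear, upgrades pointwise convergence of bounded operators to boundedness of $\Delta$. Concretely, for the 2-local case $\Delta(x)=D_{x,x}(x)$ with $\|\Delta(x)\|\le\|D_{x,x}\|\,\|x\|$, and automatic continuity bounds of derivations on $C^{*}$-algebras give a uniform bound on the relevant family, yielding $\|\Delta(x)\|\le K\|x\|$; in the approximately 2-local case one argues with the sequences $\{D^{n}_{x,x}\}$ and a uniform boundedness principle applied to the now-linear $\Delta$. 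I would phrase this as: since $\Delta$ is linear and, for each $x$, $\Delta(x)$ lies in the norm-closure of $\{D(x):D\in\mathcal{Z}^{1}_{\upvarphi,\uppsi}(\mathscr{A},E),\ \|D\|\le M\}$ for a fixed $M$ coming from automatic continuity, $\Delta$ is bounded by $M$.

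Having established that $\Delta$ is a bounded Jordan $(\upvarphi,\uppsi)$-derivation from the $C^{*}$-algebra $\mathscr{A}$ into the Banach $\mathscr{B}-\mathscr{C}$-bimodule $E$, I would simply apply Theorem~3.6 (the Johnson-type theorem just proved) to conclude that $\Delta$ is a $(\upvarphi,\uppsi)$-derivation. If $E$ is not unital, one passes to its unitization $E^{\sharp}$ as in the proof of Proposition~\ref{31}, or equivalently to the bidual, so that the unital hypothesis of Lemma~\ref{32}/Theorem~3.6 is met without changing the conclusion.

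The main obstacle, then, is not the algebraic part—that is handed to us essentially for free by Lemma~\ref{42} together with the additivity assumption—but the verification of boundedness of $\Delta$ from the (approximate) 2-locality, i.e. making precise the uniform bound on the families $\{D_{x,y}\}$ or $\{D^{n}_{x,y}\}$ via automatic continuity of $(\upvarphi,\uppsi)$-derivations on $C^{*}$-algebras and then pushing that through to the linear map $\Delta$. Once that is in place, the statement follows immediately from the Jordan-to-full derivation theorem of Section~3.
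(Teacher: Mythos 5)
Your overall route is the same as the paper's: additivity plus Lemma \ref{42}(i) gives linearity, Lemma \ref{42}(ii) gives the identity $\Delta(x^{2})=\Delta(x)\cdot\uppsi(x)+\upvarphi(x)\cdot\Delta(x)$, so $\Delta$ is a Jordan $(\upvarphi,\uppsi)$-derivation, and then the Jordan-to-derivation theorem of Section 3 is invoked. In fact the paper's own proof consists of exactly the first two steps followed by the final assertion, with no discussion of boundedness at all, so the extra care you devote to continuity goes beyond what is written there.

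However, the boundedness step as you formulate it does not work. Automatic continuity (where it applies) makes each individual $D_{x,y}$ bounded, but it cannot produce a single constant $M$ with $\|D\|\le M$ for all bounded $(\upvarphi,\uppsi)$-derivations: the set $\mathcal{Z}^{1}_{\upvarphi,\uppsi}(\mathscr{A},E)$ is a linear space, hence never norm-bounded unless it is trivial. Nor can you obtain $M$ from Banach--Steinhaus applied to the family $\{D_{x,y}\}$: 2-locality constrains $D_{x,y}$ only at the two points $x$ and $y$, so the family need not even be pointwise bounded (one may always add to $D_{x,y}$ a derivation that happens to vanish at $x$ and $y$). The estimate $\|\Delta(x)\|\le\|D_{x,x}\|\,\|x\|$ has an $x$-dependent constant, and a closed-graph argument for the now-linear $\Delta$ would require knowing that $\Delta(x_{n})\to y$ forces $y=\Delta(x)$, which the 2-local data does not supply. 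So if you want to pass through the paper's theorem on \emph{bounded} Jordan $(\upvarphi,\uppsi)$-derivations, you need either an actual proof that $\Delta$ is bounded (none is given, here or in the paper) or a version of the Jordan-to-derivation step that dispenses with continuity. To be fair, the paper's own proof of this lemma silently jumps from ``linear and satisfies the Jordan identity'' to ``is a $(\upvarphi,\uppsi)$-derivation'', so the gap is inherited from the source rather than introduced by you; but your proposed repair, a uniform bound extracted from automatic continuity, is not a valid repair.
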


\begin{proof}
We conclude from  Lemma 3.6 that each additive 2-local $ (\upvarphi, \uppsi) $-derivation $ \Delta $ from a $ C^{*} $-algebra $ \mathscr{A} $ to a Banach  $ \mathscr{B}-\mathscr{C} $-bimodule $ E $ is linear and satisfies
$ \Delta(x^{2}) = \Delta(x).\uppsi(x) + \upvarphi(x)\Delta(x) $ for any $ x\in \mathscr{A} $, so $ \Delta $ is a $ (\upvarphi, \uppsi) $-derivation.
\end{proof}


\textbf{Main Theorems}

\begin{theorem} \label{44}
 Let  $ \mathscr{M} $ be a finite von Neumann algebra and let $ \upvarphi, \uppsi $ be continuous homomorphisms on  $ \mathscr{M} $. Then each $ (\upvarphi, \uppsi) $-derivation  on $ \mathscr{M} $ is  a $ (\upvarphi, \uppsi) $-inner derivation. i.e. there is an element $ a_{0} $  in $ \mathscr{M} $  such that $ \delta(a) =  a_{0}\cdot \uppsi(a) - \upvarphi(a)\cdot  a_{0} $ and $ \|a_{0}\| \leq \|\delta\| $.
\end{theorem}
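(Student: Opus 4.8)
The plan is to extend the averaging argument behind Sakai's innerness theorem for derivations of finite von Neumann algebras (\cite{Sakai}, Theorem 4.1.6) to the $(\upvarphi,\uppsi)$-twisted setting, by embedding $\mathscr{M}$ into its $L^2$-completion with respect to a (center-valued) trace and exploiting the uniqueness of the Chebyshev centre of a bounded set of a Hilbert space. Throughout I take $\upvarphi,\uppsi$ to be unital (the natural case), so that $\delta(1)=\delta(1\cdot1)=\delta(1)\uppsi(1)+\upvarphi(1)\delta(1)=2\delta(1)$ forces $\delta(1)=0$; recall also that $\delta$ is bounded (a derivation-type map on a von Neumann algebra is automatically continuous). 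Since $\mathscr{M}$ is the linear span of its unitary group $\mathcal{U}(\mathscr{M})$ and both $\delta$ and $a\mapsto a_0\uppsi(a)-\upvarphi(a)a_0$ are linear, it suffices to produce a single $a_0\in\mathscr{M}$ with $\|a_0\|\le\|\delta\|$ such that $\delta(u)=a_0\uppsi(u)-\upvarphi(u)a_0$ for every $u\in\mathcal{U}(\mathscr{M})$; that this formula then defines a $(\upvarphi,\uppsi)$-derivation on all of $\mathscr{M}$ is a direct check.

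First I would record the relevant cocycle. For $u\in\mathcal{U}(\mathscr{M})$ put $b_u:=\delta(u)\,\uppsi(u)^{*}$; here $\uppsi(u)$ is a unitary of $\mathscr{M}$, so $\uppsi(u)^{*}=\uppsi(u)^{-1}=\uppsi(u^{*})$. Using $\delta(1)=0$ and the $(\upvarphi,\uppsi)$-Leibniz rule, for $u,v\in\mathcal{U}(\mathscr{M})$,
\[
b_{uv}=\delta(uv)\,\uppsi(uv)^{*}=b_u+\upvarphi(u)\,b_v\,\uppsi(u)^{*}.
\]
Equivalently the affine maps $\alpha_u(\xi):=b_u+\upvarphi(u)\,\xi\,\uppsi(u)^{*}$ satisfy $\alpha_u\alpha_v=\alpha_{uv}$, $\alpha_1=\mathrm{id}$, $\alpha_u^{-1}=\alpha_{u^{*}}$, and $\alpha_u(b_v)=b_{uv}$, so $\{\alpha_u:u\in\mathcal{U}(\mathscr{M})\}$ is a group permuting the norm-bounded family $\{b_u:u\in\mathcal{U}(\mathscr{M})\}$ (note $\|b_u\|\le\|\delta\|$).

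Now let $\tau$ be a faithful normal tracial state on $\mathscr{M}$ (such $\tau$ exists when $\mathscr{M}$ is $\sigma$-finite; in general one uses a separating family of normal tracial states, equivalently the center-valued trace, and reduces to this case), and set $H=L^2(\mathscr{M},\tau)$, so $\mathscr{M}\hookrightarrow H$. Since $\upvarphi(u),\uppsi(u)$ are unitaries, left multiplication by $\upvarphi(u)$ and right multiplication by $\uppsi(u)^{*}$ are unitary operators on $H$, hence each $\alpha_u$ extends to an affine isometry of $H$, and $\alpha_u(\{b_v:v\})=\{b_w:w\}$. Let $a_0$ be the Chebyshev centre of $\{b_u:u\in\mathcal{U}(\mathscr{M})\}$ in $H$, i.e.\ the centre of the unique smallest closed ball of $H$ containing this set. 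A standard Hilbert-space estimate (projecting the circumcentre onto the closed convex hull and comparing radii) shows $a_0$ lies in the $\|\cdot\|_2$-closed convex hull of $\{b_u\}$; and since the operator-norm ball of $\mathscr{M}$ is $\|\cdot\|_2$-closed in $H$ (a $\|\cdot\|_2$-limit of a bounded net in $\mathscr{M}$ coincides with its $\sigma$-weak limit, which stays in the ball), we get $a_0\in\mathscr{M}$ with $\|a_0\|\le\|\delta\|$. As each $\alpha_u$ is an affine isometry of $H$ mapping $\{b_v:v\}$ onto itself, it maps the smallest enclosing ball onto itself and, by uniqueness of the centre, fixes $a_0$: $\alpha_u(a_0)=a_0$ for all $u$. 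Unwinding $b_u=a_0-\upvarphi(u)a_0\uppsi(u)^{*}$ and multiplying on the right by $\uppsi(u)$ yields $\delta(u)=a_0\uppsi(u)-\upvarphi(u)a_0$ for all $u\in\mathcal{U}(\mathscr{M})$, hence for all of $\mathscr{M}$ by linearity, so $\delta$ is $(\upvarphi,\uppsi)$-inner with the stated norm bound.

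The step I expect to be the main obstacle is the simultaneous-fixed-point step: one must verify carefully that the Chebyshev centre lands in the right convex hull and is genuinely fixed by the whole group $\{\alpha_u\}$ — this is precisely where finiteness of $\mathscr{M}$ enters, since the trace is what makes the twisted conjugations $\xi\mapsto\upvarphi(u)\xi\uppsi(u)^{*}$ isometries on $H$ and thereby supplies a canonical, group-invariant centre; without a trace no such device is available. Secondary technical points are the reduction to the $\sigma$-finite case via a separating family of normal tracial states, and, should one wish to allow non-unital or non-$*$ homomorphisms $\upvarphi,\uppsi$, first cutting down by the idempotents $\upvarphi(1)$ and $\uppsi(1)$ and replacing $\uppsi(u)^{*}$ by the inverse of $\uppsi(u)$ inside $\uppsi(1)\mathscr{M}\uppsi(1)$.
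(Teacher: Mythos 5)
Your proposal is essentially correct where it is carried out, and it takes a genuinely different route from the paper's. Both proofs start from the same cocycle machinery: your $\alpha_u$ is exactly the paper's affine map $T_u(x)=\big(\upvarphi(u)x+\delta(u)\big)\uppsi(u)^{-1}$, with $b_u=T_u(0)=\delta(u)\uppsi(u)^{-1}$, and both rest on the identity $T_uT_v=T_{uv}$ together with the fact that traciality makes $x\mapsto\upvarphi(u)x\uppsi(u)^{*}$ a $\|\cdot\|_{2,\tau}$-isometry. The difference is the fixed-point device. The paper stays inside $\mathscr{M}$: it applies Zorn's lemma to the family of $\sigma(\mathscr{M},\mathscr{M}_*)$-closed convex $T_u$-invariant sets in the ball of radius $\|\delta\|$, takes a minimal element, and shows it is a singleton by a parallelogram-law estimate in the seminorms $P_\tau(x)=\tau(x^*x)^{1/2}$ attached to a faithful \emph{family} of normal tracial states. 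You instead take the unique Chebyshev centre of the bounded orbit $\{b_u\}$ in $L^2(\mathscr{M},\tau)$ and use that a bijective affine isometry permuting the orbit must fix the centre; the verifications that the centre lies in the $\|\cdot\|_2$-closed convex hull and hence in the operator-norm ball of radius $\|\delta\|$ (that ball being $\|\cdot\|_2$-closed) are correct, and this route avoids Zorn and the minimality analysis altogether. Your standing assumptions that $\upvarphi,\uppsi$ are unital $*$-homomorphisms (so $\upvarphi(u),\uppsi(u)$ are unitaries) and that $\delta$ is bounded are also used, silently, by the paper — it writes $\|\delta\|$, needs $\|\uppsi(u)^{-1}\|=1$, and uses $\big(\upvarphi(u)x\uppsi(u)^{-1}\big)^{*}=\uppsi(u)x^{*}\upvarphi(u)^{-1}$ — so I do not count them against you, although neither follows from the bare hypothesis ``continuous homomorphisms''.

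The one genuine gap is the general finite case. A finite von Neumann algebra carries a faithful \emph{family} of normal tracial states but in general no single faithful normal trace (e.g.\ an uncountable $\ell^{\infty}$-direct sum of finite factors), and your parenthetical reduction ``one uses a separating family \dots and reduces to this case'' does not go through automatically: the supports of the individual traces are central projections $z$, but $\delta$, $\upvarphi$, $\uppsi$ need not map $z\mathscr{M}$ into itself, so you cannot simply restrict to the $\sigma$-finite corners; nor is it clear that Chebyshev centres taken with respect to different traces agree, since each seminorm $\|\cdot\|_{2,\tau}$ is degenerate off the support of $\tau$. To cover the general case you need an extra device: the paper's Zorn-minimal $\sigma$-weakly compact convex invariant set attacked with all the seminorms simultaneously, or a Ryll--Nardzewski-type fixed point theorem for the noncontracting affine action of $\{\alpha_u\}$ on the $\sigma$-weakly closed convex hull of $\{b_u\}$, or a circumcentre argument run over the centre-valued trace in the self-dual Hilbert module over $Z(\mathscr{M})$. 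With any of these substituted for the single-trace Chebyshev centre, your argument is complete; as written it proves the theorem only for $\sigma$-finite $\mathscr{M}$.
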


\begin{proof}
Let $ \mathscr{M}^{u} $ be the group of all unitary elements in $ \mathscr{M} $.
For $ u \in \mathscr{M} $, put
$$
T_{u}(x) := \big( \upvarphi(u).x + \delta(u)\big).\uppsi(u)^{-1}, \hspace{.5cm}(x\in  \mathscr{M})
$$
 If $ u, v \in  \mathscr{M}^{u} $, then
\begin{align*}
T_{u}T_{v}(x) &= \bigg( \upvarphi(u).\big( \upvarphi(v).x + \delta(v)\big).\uppsi(v)^{-1} +\delta(u) \bigg).\uppsi(u)^{-1} \\
               &= \bigg( \big( \upvarphi(uv).x + \upvarphi(u).\delta(v)\big).\uppsi(v)^{-1} +\delta(u) \bigg).\uppsi(u)^{-1} \\
               &= \upvarphi(uv).x.\uppsi(uv)^{-1} + \upvarphi(u).\delta(v).\uppsi(uv)^{-1} +\delta(u).\uppsi(v).\uppsi(v)^{-1}.\uppsi(u)^{-1} \\
               &= \upvarphi(uv).x.\uppsi(uv)^{-1} + \big(\upvarphi(u).\delta(v) +\delta(u).\uppsi(v)\big).\uppsi(uv)^{-1} \\
               &= \big(\upvarphi(uv).x + \delta(uv)\big).\uppsi(uv)^{-1} = T_{uv}(x)
\end{align*}
Hence $ T_{u}T_{v} = T_{uv} $, ($ u, v \in  \mathscr{M}^{u} $). Let $ \Delta $ be the set of all non-empty $ \sigma(\mathscr{M}, \mathscr{M}_{*}) $-closed convex sets $ \mathscr{K} $ in $ \mathscr{M} $ satisfying the following conditions
$$
1.\hspace{.1cm} T_{u}(\mathscr{K}) \subseteq \mathscr{K}, \hspace{.5cm} 2.\hspace{.1cm} \sup_{x \in \mathscr{K}} \|x\| \leq \|\delta\|
$$
Since
\begin{align*}
\big\| T_{u}(0)\big\| &= \big\| \big( \upvarphi(u).0 + \delta(u)\big).\uppsi(u)^{-1}\big\|  \\
             &=  \big\| \big( \delta(u)\big).\uppsi(u)^{-1}\big\| \leq \big\| \delta \big\| \big\| u \big\| \big\| \uppsi(u)^{-1}\big\| = \big\| \delta \big\|
\end{align*}
Therefore $ \Delta $ is on-empty. Define an order in $ \Delta $ by the set inclusion. Let
$ \big( \mathscr{K}_{\alpha}\big)_{\alpha\in I} $ be linearly ordered decreasing subsets in $ \Delta $. Then $ \bigcap_{\alpha\in I}\mathscr{K}_{\alpha} \in \Delta $, because $ \mathscr{K}_{\alpha} (\alpha\in I)$ is compact (Arzela Ascoli).  Hence there is a minimal element $ \mathscr{K}_{0} $ in $ \Delta $ by Zorns lemma.

If $ a,b \in \mathscr{K}_{0} $, then $ a - b \in \mathscr{K}_{0} \setminus \mathscr{K}_{0} $
and for $ u \in \mathscr{M}^{u} $, we have
\begin{align*}
 \upvarphi(u).\big( a - b \big).\uppsi(u)^{-1} &= \upvarphi(u). a .\uppsi(u)^{-1} - \upvarphi(u). b.\uppsi(u)^{-1} \\
             &= \big( \upvarphi(u). a .\uppsi(u)^{-1} + \delta(u).\uppsi(u)^{-1}\big) - \big( \upvarphi(u). b .\uppsi(u)^{-1} + \delta(u).\uppsi(u)^{-1}\big)  \\
               &= \big( \upvarphi(u). a + \delta(u)\big).\uppsi(u)^{-1} - \big( \upvarphi(u). b + \delta(u)\big).\uppsi(u)^{-1} \\
              &= T_{u}(a) - T_{u}(b) \in \mathscr{K}_{0} \setminus \mathscr{K}_{0}
\end{align*}
Hence $ \mathscr{K}_{0} \setminus \mathscr{K}_{0} $ is invariant under the mapping
$$
\Phi^{u}: x \longmapsto \upvarphi(u).\big( a - b \big).\uppsi(u)^{-1}, \hspace{.5cm}\big( x \in \mathscr{M}\big)
$$
i.e. $ \Phi^{u}\big( \mathscr{K}_{0} \setminus \mathscr{K}_{0}\big) \subseteq\mathscr{K}_{0} \setminus \mathscr{K}_{0} $.
Since $ \mathscr{M} $ is a finite von Neumann algebra, there is a faithful family of normal tracial states $ \uptau $  on $ \mathscr{M} $ (\cite{Bing}, Theorem 6.3.10).
For each $ \tau \in \uptau $, define seminorm $ P_{\tau} $ as follows
 $$
 P_{\tau}(x) := \tau(x^{*}x)^{\frac{1}{2}}, \hspace{.5cm}(x \in \mathscr{M}).
 $$
 Let $ \lambda = \sup_{x\in \mathscr{K}_{0}}P_{\tau}(x) $ and let $ a, b \in \mathscr{K}_{0} $, then for an arbitrary positive number $ \varepsilon > 0 $, there is an element $ u \in \mathscr{M}^{u} $ with $ P_{\tau}(\frac{a+b}{2}) > \lambda - \varepsilon $. Indeed, if there is $ \varepsilon_{0} > 0 $ such that $ \forall u \in \mathscr{M}^{u} $,
 $ P_{\tau}(T_{u}(\frac{a+b}{2})) \leq \lambda - \varepsilon_{0} $ then
 $ P_{\tau}(\frac{a+b}{2}) = P_{\tau}(T_{id}(\frac{a+b}{2})) \leq \lambda - \varepsilon_{0} $,  which for $ a=b $ implies that
 $ P_{\tau}(\frac{a+a}{2}) \leq \lambda - \varepsilon_{0} $, and so
 $ \lambda = \sup_{a\in \mathscr{K}_{0}}P_{\tau}(a) \leq \lambda - \varepsilon_{0} $ which is impossible.
Since
$ P_{\tau}(T_{u}(a)) \leq \lambda, P_{\tau}(T_{u}(b)) \leq \lambda $ we have
\begin{align*}
\bigg( P_{\tau}(\frac{T_{u}(a)+T_{u}(b)}{2})) \bigg)^{2} &=  \frac{1}{4}\tau\big( T_{u}(a)^{*}T_{u}(a) + T_{u}(a)^{*}T_{u}(b) + T_{u}(b)^{*}T_{u}(a) + T_{u}(b)^{*}T_{u}(b)\big)  \\
  \bigg( P_{\tau}(\frac{T_{u}(a)-T_{u}(b)}{2})) \bigg)^{2} &=  \frac{1}{4}\tau\big( T_{u}(a)^{*}T_{u}(a) - T_{u}(a)^{*}T_{u}(b) - T_{u}(b)^{*}T_{u}(a) + T_{u}(b)^{*}T_{u}(b)\big)
\end{align*}
It follows from the last two equalities that
\begin{align*}
\bigg( P_{\tau}(\frac{T_{u}(a)+T_{u}(b)}{2})) \bigg)^{2} &+ \bigg( P_{\tau}(\frac{T_{u}(a)-T_{u}(b)}{2})) \bigg)^{2} \\
&= \frac{1}{2}\bigg( P_{\tau}(T_{u}(a))^{2} + P_{\tau}(T_{u}(b))^{2}\bigg)
 - \bigg(P_{\tau}\big(T_{u}(\frac{a+b}{2})\big)\bigg)^{2} \\
      &\leq \frac{1}{2}\big( \lambda^{2} + \lambda^{2}\big) - \big( \lambda - \varepsilon\big)^{2} = \big(2\lambda - \varepsilon \big) \varepsilon
\end{align*}
So
\begin{align*}
      &\frac{1}{4}\tau\bigg( \big( T_{u}(a) - T_{u}(b)\big)^{*}\big( T_{u}(a) - T_{u}(b)\big)\bigg) = 0,\hspace{.5cm} (\tau \in \uptau) \\
\Longleftrightarrow \hspace{.5cm} &  \tau\bigg( \big( \upvarphi(u)\big( a - b \big)\uppsi(u)^{-1} \big)^{*}\big( \upvarphi(u)\big( a - b \big)\uppsi(u)^{-1} \big)\bigg) = 0,\hspace{.5cm} (\tau \in \uptau)\\
\Longleftrightarrow \hspace{.5cm} &  \tau\bigg( \uppsi(u)\big( a - b \big)^{*}\upvarphi(u)^{-1} \upvarphi(u).\big( a - b \big)\uppsi(u)^{-1} \bigg) = 0,\hspace{.5cm} (\tau \in \uptau)\\
\Longleftrightarrow \hspace{.5cm} &  \tau\bigg( \uppsi(u)\big( a - b \big)^{*}\big( a - b \big)\uppsi(u)^{-1} \bigg) = 0,\hspace{.5cm} (\tau \in \uptau)\\
\Longleftrightarrow \hspace{.5cm} &  \tau\bigg( \big( a - b \big)^{*}\big( a - b \big)\uppsi(u)^{-1}\uppsi(u) \bigg) = 0,\hspace{.5cm} (\tau \in \uptau)\\
\Longleftrightarrow \hspace{.5cm} &  \tau\bigg( \big( a - b \big)^{*}\big( a - b \big)\bigg) = 0,\hspace{.5cm} (\tau \in \uptau) \hspace{.5cm} \Longleftrightarrow \hspace{.5cm} a-b = 0.
\end{align*}
Hence $ \mathscr{K}_{0} $ consists  of only one elemente $ a_{0} $. Since each element of $ \mathscr{M} $ is
 a finite linear combination of unitary elements in $ \mathscr{M} $,
\footnote{
If $ \mathscr{A} $ is a $ C^{*} $-algebra and $ a \in \mathscr{A} $ be such that $ \|a\| \leq 1 $, Then $ a = b + ic $ where $ b, c \in \mathscr{A}_{sa} $ are self-adjoint elements and given by
$$
b = \frac{1}{2}(a + a^{*}), \hspace{.5cm} c = \frac{1}{2i}(a - ia^{*}).
$$
We can decompose b and c as
$$
b = \frac{1}{2}(U_{b} + V_{b}), \hspace{.5cm} c = \frac{1}{2}(U_{c} + V_{c}).
$$
where $ U_{b}, V_{b}, U_{c}, V_{c} $ are unitary and given by
\begin{align*}
  U_{b} = b + i\sqrt{1-b^{2}}, & \hspace{.5cm} V_{b} = b - i\sqrt{1-b^{2}} \\
  U_{c} = c + i\sqrt{1-c^{2}}, & \hspace{.5cm} V_{c} = c - i\sqrt{1-c^{2}}
\end{align*}
} 
  we have
$$
 T_{u}(a_{0}) = \big( \upvarphi(u).a_{0} + \delta(u)\big).\uppsi(u)^{-1}= a_{0}.
 $$
Therefore
 $ \delta(x) =  a_{0}\cdot \uppsi(x) - \upvarphi(x)\cdot  a_{0} $, ($ x \in \mathscr{M} $). Clearly  $ a_{0} \in \mathscr{K}_{0} $ implies that $ \|a_{0}\| = \sup_{x \in \mathscr{K}_{0}} \|x\| \leq \|\delta\| $.
\end{proof}



\begin{definition} \label{45} 
A linear functional $ \tau:\mathscr{M} \rightarrow \mathbb{C} $ is called $ (\upvarphi, \uppsi) $-tracial, 
if  
$$
 \tau(\upvarphi(x)y) = \tau(y\uppsi(x)), \hspace{.5cm} (x,y \in \mathscr{M}).
$$
 A von Neumann algebra $ \mathscr{M} $ is called $ (\upvarphi, \uppsi) $-finite, if there exists a faithful family of normal $ (\upvarphi, \uppsi) $-tracial states $ \mathfrak{T} $ on $ \mathscr{M} $. 
\end{definition}


\begin{theorem} \label{46}
Let $ \mathscr{M} $ be a finite and $ (\upvarphi, \uppsi) $-finite von Neumann algebra.
Then each 2-local $ (\upvarphi, \uppsi) $-derivation $ \Delta $  on $ \mathscr{M} $ with condition $ \{ \big( \Delta(u+v)-\Delta(u)-\Delta(v)\big)^{*}\} \subseteq \uppsi(\mathscr{M}) $,  is a $ (\upvarphi, \uppsi) $-derivation.
\end{theorem}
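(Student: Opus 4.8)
The plan is to reduce the assertion to the additivity of $\Delta$ and then feed this into the machinery already in place. By Lemma~\ref{42}, the (approximately) 2-local $(\upvarphi,\uppsi)$-derivation $\Delta$ is homogeneous and satisfies $\Delta(x^{2})=\Delta(x)\uppsi(x)+\upvarphi(x)\Delta(x)$ for every $x\in\mathscr{M}$; hence, the moment I know that $\Delta$ is additive, Lemma~\ref{43} (equivalently, the $C^{*}$-algebra form of Johnson's theorem for Jordan $(\upvarphi,\uppsi)$-derivations established in Section~3) tells me that $\Delta$ is a $(\upvarphi,\uppsi)$-derivation. So everything comes down to proving $\Delta(u+v)=\Delta(u)+\Delta(v)$ for all $u,v\in\mathscr{M}$. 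Fix such $u,v$, put $d:=\Delta(u+v)-\Delta(u)-\Delta(v)$, and recall that by hypothesis $d^{*}=\uppsi(m)$ for some $m\in\mathscr{M}$.

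First I would exhibit $d$ as a value of an honest $(\upvarphi,\uppsi)$-derivation. Using 2-locality, choose bounded $(\upvarphi,\uppsi)$-derivations $D_{u+v,u}$ and $D_{u,v}$ with $D_{u+v,u}(u+v)=\Delta(u+v)$, $D_{u+v,u}(u)=\Delta(u)$, $D_{u,v}(u)=\Delta(u)$, $D_{u,v}(v)=\Delta(v)$, and set $\Lambda:=D_{u+v,u}-D_{u,v}$. Then $\Lambda$ is a bounded $(\upvarphi,\uppsi)$-derivation on $\mathscr{M}$ with $\Lambda(u)=0$ and $\Lambda(u+v)=d$ (so also $\Lambda(v)=d$). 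By Theorem~\ref{44}, $\Lambda$ is $(\upvarphi,\uppsi)$-inner: there is $b_{0}\in\mathscr{M}$ with $\Lambda(a)=b_{0}\cdot\uppsi(a)-\upvarphi(a)\cdot b_{0}$ for $a\in\mathscr{M}$; in particular $b_{0}\,\uppsi(u)=\upvarphi(u)\,b_{0}$ and $d=b_{0}\,\uppsi(u+v)-\upvarphi(u+v)\,b_{0}$.

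The decisive step is then a computation against the faithful family $\mathfrak{T}$ of normal $(\upvarphi,\uppsi)$-tracial states provided by the $(\upvarphi,\uppsi)$-finiteness of $\mathscr{M}$. For $\tau\in\mathfrak{T}$ I would evaluate $\tau(d^{*}d)$: substituting $d^{*}=\uppsi(m)$ and the inner form of $d$, and repeatedly invoking the identity $\tau(\upvarphi(x)y)=\tau(y\uppsi(x))$ to transport the factor $\uppsi(m)$ across the products (and using $b_{0}\uppsi(u)=\upvarphi(u)b_{0}$), one should see the terms cancel in pairs, so $\tau(d^{*}d)=0$ for every $\tau\in\mathfrak{T}$. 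Since $\mathfrak{T}$ is faithful and $\tau(d^{*}d)\ge 0$, this forces $d=0$, i.e. $\Delta(u+v)=\Delta(u)+\Delta(v)$; as $u,v$ were arbitrary, $\Delta$ is additive and, by the first paragraph, a $(\upvarphi,\uppsi)$-derivation. The approximately 2-local case runs along the same lines, replacing $D_{u+v,u},D_{u,v}$ by their approximating sequences $D^{n}_{u+v,u},D^{n}_{u,v}$ and passing to the limit, the bound $\|b_{0}^{(n)}\|\le\|\Lambda_{n}\|$ from Theorem~\ref{44} keeping the implementing elements controlled.

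The main obstacle I anticipate is precisely the $(\upvarphi,\uppsi)$-trace identity $\tau(d^{*}d)=0$. One must track carefully how $\tau(\upvarphi(x)y)=\tau(y\uppsi(x))$ interacts with the particular element $d^{*}=\uppsi(m)$ and with the relation $b_{0}\uppsi(u)=\upvarphi(u)b_{0}$, because a careless computation only reproduces the automatic fact that $\tau$ annihilates a $(\upvarphi,\uppsi)$-commutator; it is the membership $d^{*}\in\uppsi(\mathscr{M})$ that has to be used essentially in order to push the computation past that tautology.
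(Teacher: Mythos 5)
Your overall architecture agrees with the paper's: reduce everything to additivity of $\Delta$, finish with Lemma \ref{42} and Lemma \ref{43}, and use Theorem \ref{44} to replace the local derivations by inner ones. The problem is that your ``decisive step'' is not merely an anticipated difficulty --- it is a genuine gap that cannot be closed from the data you retain. After forming $\Lambda=D_{u+v,u}-D_{u,v}$ you keep only: $d=\Lambda(v)$ with $\Lambda$ inner (say $\Lambda(a)=b_{0}\uppsi(a)-\upvarphi(a)b_{0}$), the relation $b_{0}\uppsi(u)=\upvarphi(u)b_{0}$, the membership $d^{*}=\uppsi(m)$, and the identity $\tau(\upvarphi(x)y)=\tau(y\uppsi(x))$. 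These facts do not imply $\tau(d^{*}d)=0$: the tracial identity turns $\tau\bigl(\uppsi(m)(b_{0}\uppsi(v)-\upvarphi(v)b_{0})\bigr)$ into $\tau\bigl((\upvarphi(v)\uppsi(m)-\uppsi(m)\upvarphi(v))b_{0}\bigr)$, which has no reason to vanish. Worse, the implication is false in general: take $\upvarphi=\uppsi=\mathrm{id}$ on $\mathscr{M}=M_{2}(\mathbb{C})$ with its faithful normalized trace, $u=1$, and $b_{0},v$ noncommuting; then $d=b_{0}v-vb_{0}\neq 0$ satisfies every condition you have kept (the hypothesis $d^{*}\in\uppsi(\mathscr{M})$ being automatic there). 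So no amount of shuffling the $(\upvarphi,\uppsi)$-trace identity will rescue the step: by invoking 2-locality only at the two pairs $(u+v,u)$ and $(u,v)$ you have discarded exactly the information that forces $d=0$.

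What the paper does instead is bring in a third, \emph{arbitrary} element $w$ and apply 2-locality to the pairs $(x,w)$ for $x\in\{u+v,\,u,\,v\}$. Each $D_{x,w}$ is inner by Theorem \ref{44}, and a normal $(\upvarphi,\uppsi)$-tracial state kills every element of the form $m\uppsi(z)-\upvarphi(z)m$, so $\tau\bigl(D_{x,w}(xw)\bigr)=0$; expanding $D_{x,w}(xw)=D_{x,w}(x)\uppsi(w)+\upvarphi(x)D_{x,w}(w)$ and using $D_{x,w}(x)=\Delta(x)$, $D_{x,w}(w)=\Delta(w)$ gives the bilinear identity $\tau\bigl(\Delta(x)\uppsi(w)\bigr)=-\tau\bigl(\upvarphi(x)\Delta(w)\bigr)$ for all $x,w$. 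The right-hand side is additive in $x$ because $\upvarphi$ is linear, even though $\Delta$ is not yet known to be; subtracting the identities for $x=u+v$, $x=u$, $x=v$ with the same $w$ yields $\tau\bigl((\Delta(u+v)-\Delta(u)-\Delta(v))\uppsi(w)\bigr)=0$ for every $w\in\mathscr{M}$ and every $\tau\in\mathfrak{T}$. Only at this point does the hypothesis $\bigl(\Delta(u+v)-\Delta(u)-\Delta(v)\bigr)^{*}\in\uppsi(\mathscr{M})$ enter: choose $w$ with $\uppsi(w)=d^{*}$ and conclude $\tau(dd^{*})=0$, hence $d=0$ by faithfulness. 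If you replace your ``decisive step'' by this three-pair argument, the rest of your outline goes through exactly as in the paper.
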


\begin{proof}
Let $ \Delta $ be a 2-local $ (\upvarphi, \uppsi) $-derivation and let $ \mathfrak{T} $ be a faithful family of normal $ (\upvarphi, \uppsi) $-tracial states on $ \mathscr{M} $ and $ \tau \in \mathfrak{T} $. For each $ x,y \in \mathscr{M} $ there exists a $ (\upvarphi, \uppsi) $-derivation $ D_{x,y} $ on $ \mathscr{M} $ such that $ \Delta(x) = D_{x,y}(x) $ and $ \Delta(y) = D_{x,y}(y) $. It follows from theorem \ref{44} that $ D_{x,y} $ is $ (\upvarphi, \uppsi) $-inner, so there is an element $ m \in \mathscr{M} $ such that
$$
m\uppsi(xy) - \upvarphi(xy)m = D_{x,y}(xy) = D_{x,y}(x)\upvarphi(y) + \upvarphi(x)D_{x,y}(y),
$$
Therefore
$$
\tau\big( D_{x,y}(x)\uppsi(y) + \upvarphi(x)D_{x,y}(y)\big) = \tau\big( m\uppsi(xy) - \upvarphi(xy)m\big) = 0,
$$
So
$$
\tau\big(  D_{x,y}(x)\uppsi(y)\big) = -\tau\big(\upvarphi(x)D_{x,y}(y)\big)
$$
Based on the above analysis, the following equality can be obtained
$$
\tau\big( \Delta(x)\uppsi(y)\big) = -\tau\big(\upvarphi(x)\Delta(y)\big)
$$
For arbitrary $ u, v, w \in \mathscr{M} $, set $ x= u+ v, y = w $. So we conclude that
\begin{align*}
\tau\big(  \Delta(u+ v)\uppsi(w)\big) &= -\tau\big(\upvarphi(u+v) \Delta(w)\big) \\
   &= -\tau\big(\upvarphi(u) \Delta(w)\big) -\tau\big(\upvarphi(v) \Delta(w)\big) \\
   &= \tau\big(  \Delta(u)\uppsi(w)\big) +\tau\big(  \Delta(v)\uppsi(w)\big) \\
   &= \tau\big(  (\Delta(u)+ \Delta(v))\uppsi(w)\big). \hspace{.5cm}(\tau\in \mathfrak{T})
\end{align*}
Hence
$$
\tau\bigg( (\Delta(u+ v)-\Delta(u)-\Delta(v))\uppsi(w)\bigg) = 0, \hspace{.5cm}(\tau\in \mathfrak{T})
$$
It folows from assumption that for each $ u,v \in \mathscr{M} $, there is a $ w\in \mathscr{M} $ such that
$ \uppsi(w) = \big( \Delta(u+v)-\Delta(u)-\Delta(v)\big)^{*} $.
So
$$
\tau\bigg(\big(\Delta(u+ v)-\Delta(u)-\Delta(v)\big)\big(\Delta(u+v)-\Delta(u)-\Delta(v)\big)^{*}\bigg) = 0, \hspace{.5cm}(\tau\in \mathfrak{T})
$$
Now since the family $ \mathfrak{T} $ is faithful, we have
$$
\Delta(u+ v)-\Delta(u)-\Delta(v) = 0,
$$
So
$$
\Delta(u+ v) = \Delta(u) + \Delta(v).
$$
It follows that $ \Delta $ is an additive 2-local $ (\upvarphi, \uppsi) $-derivation, and Lemma 3.7 implies that $ \Delta $ is a bounded Jordan $ (\upvarphi, \uppsi) $-derivation.
\end{proof}


\begin{remark}
 The last theorem also hold, if we  replace the condition $ \{ \big( \Delta(u+v)-\Delta(u)-\Delta(v)\big)^{*}\} \subseteq \uppsi(\mathscr{M}) $, with $ \{ \big( \Delta(u+v)-\Delta(u)-\Delta(v)\big)^{*}\} \subseteq \upvarphi(\mathscr{M}) $ one.
\end{remark}


\begin{remark}
 The last theorem hold also for approxmately 2-local $ (\upvarphi, \uppsi) $-derivations, if in addition, $ (\upvarphi, \uppsi) $-tracial map $ \tau $ is normal.
\end{remark}


\bibliographystyle{amsplain}

\end{document}